\documentclass[a4paper,12pt]{amsart}
\usepackage{amsmath,amssymb,amsfonts,amsthm}
\usepackage{latexsym,mathrsfs}
\usepackage{graphicx}
\usepackage[all]{xy}
\input xypic
\usepackage{color}
\usepackage[pagebackref]{hyperref}
\hypersetup{colorlinks=true,linkcolor=red,citecolor=blue}
\usepackage[OT2,T1]{fontenc}
\usepackage[utf8]{inputenc}
\usepackage{lmodern}
\usepackage{microtype}
\usepackage{enumerate}
\usepackage{tikz-cd}
\usepackage{bm}
\usepackage{mathtools}
\usepackage{tensor}
\theoremstyle{plain}
\newtheorem{theorem}[subsection]{{\bf Theorem}}

\newtheorem{corollary}[subsection]{{\bf Corollary}}
\newtheorem{proposition}[subsection]{{\bf Proposition}}
\newtheorem{lemma}[subsection]{{\bf Lemma}}
\theoremstyle{definition}
\newtheorem*{definition}{{\bf Definition}}
\theoremstyle{remark}

\newtheorem{example}[subsection]{\emph{Example}}
\numberwithin{equation}{subsection}
\DeclareMathOperator{\im}{im}

\DeclareMathOperator{\B}{B}

\DeclareMathOperator{\GL}{GL}

\DeclareMathOperator{\Gr}{Gr}

\DeclareBoldMathCommand{\bbot}{\bot}

\DeclareSymbolFont{cyrletters}{OT2}{wncyr}{m}{n}
\DeclareMathSymbol{\Sha}{\mathalpha}{cyrletters}{"58}

\newcommand{\Z}{\mathbb{Z}}
\newcommand{\N}{\mathbb{Z}_{\ge 0}}
\newcommand{\Q}{\mathbb{Q}}

\newcommand{\cwedge}{\curlywedge}

\begin{document}
\title[Bogomolov multipliers]{Computing Bogomolov multipliers and finitary Bogomolov multipliers of infinite groups}
\author{Primo\v z Moravec}
\address{{
Faculty of  Mathematics and Physics, University of Ljubljana,
and Institute of Mathematics, Physics and Mechanics,
Slovenia}}
\email{primoz.moravec@fmf.uni-lj.si}
\subjclass[2020]{20J05, 20F16}
\keywords{Homology, Bogomolov multiplier, nilpotent groups, virtually abelian groups,  polycyclic groups}
\thanks{ORCID: \url{https://orcid.org/0000-0001-8594-0699}. The author acknowledges the financial support from the Slovenian Research and Innovation Agency (ARIS), research core funding No. P1-0222, and project No. J1-50001.}
\date{today}
\begin{abstract}
\noindent
We study homological Bogomolov multipliers and finitary Bogomolov multipliers of infinite groups, with emphasis on polycyclic groups. We obtain structural descriptions for torsion-free nilpotent groups of class two, semidirect products, wreath products, and infinite simple groups. The class-two case reveals a connection with rational points on linear sections of Grassmannians, explaining an arithmetic obstruction to a general algorithm. On the positive side, we give a deterministic algorithm for virtually abelian polycyclic groups and an algorithm for finitary Bogomolov multipliers of arbitrary polycyclic groups, both implemented in GAP.
\end{abstract}
\maketitle


\section{Introduction}
\label{s:intro}

\noindent
Let $G$ be a group. A result of Miller \cite{Mil52} relates the Schur multiplier
$M(G)=H_2(G,\mathbb Z)$ of $G$ with the universal relations among commutators in $G$,  Inside $M(G)$
lies the subgroup $M_0(G)$
generated by the elementary relations arising from commuting pairs.  The
quotient $B_0(G)=M(G)/M_0(G)$
is the homological Bogomolov multiplier.  Its cohomological dual was
introduced by Bogomolov in connection with unramified Brauer groups and
Noether's problem \cite{Bog88}. The homological description, valid for
arbitrary groups, was developed in \cite{Mor12}.  Thus $B_0(G)$ may be
viewed as the group of universal commutator relations which are not
generated by the evident relations attached to commuting elements.

For finite groups this invariant is effectively computable: one may
construct $M(G)$ and enumerate the finitely many commuting pairs needed
to generate $M_0(G)$.  For an infinite polycyclic group, the algorithms
of Eick and Nickel \cite{Eic08} still compute $M(G)$.  
The new difficulty is that the commuting locus is infinite,
and an effective presentation of $M(G)$ does not by itself provide a
finite generating set for $M_0(G)$.  

The main purpose of this paper is to
study this obstruction, to isolate classes in which it can be overcome,
and to give exact algorithms where possible. The cases include torsion-free nilpotent groups of class 2, virtually abelian groups, and split extensions of groups. We also compute the finitary version of Bogomolov multipliers for arbitrary polycyclic groups.

For torsion-free nilpotent groups of class two with torsion-free
abelianization, the problem admits a concrete linear-algebraic
description.  Writing
$A=G/G'\cong\mathbb Z^d$, and $C=G'\cong\mathbb Z^c$,
and denoting the commutator map by
$\beta\colon\wedge^2A\rightarrow C$,
we first observe that
$B_0(G)\cong L_\beta/D_\beta$, where
$L_\beta=\ker\beta$, and 
$D_\beta=\langle a\wedge b\in L_\beta\rangle$. This formula is reminiscent of Bogomolov's description of $B_0$ for finite $p$-groups of class 2  \cite{Bog88}.
The subgroup $D_\beta$ is governed by the rational points of the
Grassmannian section
$
 X_{L_\beta}
   =\mathbb P(L_\beta\otimes\mathbb Q)
      \cap\operatorname{Gr}(2,A\otimes\mathbb Q).
$ A similar observation has been applied by Jezernik and S'anchez \cite{JS24}.
Consequently, a uniform terminating algorithm computing even the rank of
$L_\beta/D_\beta$ would decide rational-point existence on all such
linear sections.  In dimension five these include genus-one normal
quintics, for which no unconditional uniform terminating rational-point
algorithm is presently known.  This explains why the apparently modest
task of generating commuting wedges already encounters genuine
arithmetic difficulties.  We also obtain effective low-dimensional
cases and explicit torsion-free class-two groups with nontrivial
Bogomolov multiplier.

Our positive algorithmic results are based on polynomial
parametrizations of commuting pairs.  We introduce the notion of a
polynomial cover of the commuting loci and prove that, whenever a
polycyclic group admits such a cover, finite-difference identities reduce
the infinitely many commuting wedges to finitely many explicitly
specified values.  For finitely generated virtually abelian groups the
commuting lifts above each pair in the finite quotient form an effectively
computable affine lattice, and the exterior-square pairing has degree at
most two on this lattice.  This yields a deterministic terminating
algorithm for $B_0(G)$ for every virtually abelian polycyclic group.  The
algorithm is implemented in GAP \cite{GAP4} using the package \textsf{Polycyclic}.

We also establish structural results beyond the virtually abelian case.
Using the Lyndon--Hochschild--Serre spectral sequence, we prove that
$B_0(G)=0$ for every abelian-by-cyclic group, without a finiteness
assumption. Note that Bogomolov \cite{Bog88} previously proved this result for finite groups. More generally, Barge \cite{Bar89} showed that
$B_0(A\rtimes Q)=0$
whenever $A$ is finite abelian and $Q$ is finite bicyclic. Again, we show that the same result holds true without assuming any of the two groups being finite.

Bogomolov multipliers of semidirect products of two finite groups of coprime orders were described by Kang \cite{Kan14}.
For arbitrary split
extensions we obtain a relative decomposition of $B_0(G)$, and for
restricted regular wreath products a split right-exact sequence
$$
 B_0(H)\longrightarrow B_0(H\wr Q)
       \longrightarrow B_0(Q)\longrightarrow0.
$$
This exact sequence enables effective determination of $B_0(H\wr Q)$.

Kunyavskii \cite{Kun08} proved that finite simple groups have trivial Bogomolov multiplier. Our contribution here is a construction of countable
infinite simple groups $G$ with
$B_0(G)\cong\mathbb Z^{(\mathbb N)}$.

Finally, we study the finitary multiplier $B_{0F}(G)$, generated by
homology classes coming from finite subgroups.  We prove that it is finite
whenever $G$ has only finitely many conjugacy classes of finite subgroups,
and give an exact algorithm for polycyclic groups.  The accompanying GAP 
implementation shows that all crystallographic groups of dimensions at
most three have trivial finitary multiplier.  A twelve-dimensional
group with point group of order 64 provides an example with nontrivial finitary Bogomolov multiplier.


\section{Preliminaries}
\label{s:prelim}

\subsection{Schur multipliers}
Let $G$ be a group given by a free presentation $G=F/R$. By Hopf's formula \cite[Chapter~II, \S5, Theorem~5.3, pp.~42--43]{Bro82}, the second homology group $H_2(G,\Z)$ is isomorphic to $M(G)=(F'\cap R)/[F,R]$. The commutator subgroup of the group $\widehat{G}=F/[F,R]$ is isomorphic to the nonabelian exterior square $G\wedge G$ of Brown and Loday \cite{Bro87} via $[x,y][F,R]\mapsto xR\wedge yR$. Under this identification, given $g,h\in G$, if $\widehat{g}$ and $\widehat{h}$ are their arbitrary lifts in $\widehat{G}$, then $[\widehat{g},\widehat{h}]$ corresponds to $g\wedge h$. The group $M(G)$ is isomorphic to the kernel of the commutator map $\kappa:G\wedge G\twoheadrightarrow G'$.

\subsection{Bogomolov multipliers}
Let $K(F)$ be the set of all commutators in $F$. Set $M_0(G)=\langle K(F)\cap R\rangle/[F,R]$. Note that $M_0(G)\cong \langle x\wedge y\mid x,y\in G,[x,y]=1\rangle$, and we sometimes identify $M_0(G)$ with the right-hand side. Also, upon identifying $M(G)$ with $H_2(G,\Z)$ we have that $M_0(G)$ is isomorphic to the sum of all images of maps $H_2(A,\Z)\to H_2(G,\Z)$, as $A$ runs through all abelian subgroups of $G$, see \cite{Mor12}.

If $x,y\in G$ commute, the homomorphism $\Z^2\to G$ associated to this commuting pair induces a map $S^1\times S^1\rightarrow BG$. Let us denote the image of the oriented fundamental class by $\tau_G(x,y)\in H_2(G,\Z)$. In the normalized bar complex it is represented by $[x\mid y]-[y\mid x]$, so we have
$M_0(G)=\langle \tau_G(x,y)\mid x,y\in G,[x,y]=1\rangle$. The construction is functorial; if $f:G\rightarrow L$, then $H_2(f)\tau_G(x,y)=\tau_L(f(x),f(y))$.

The quotient $B_0(G)=M(G)/M_0(G)$ is the (homological) Bogomolov multiplier of $G$ \cite{Mor12}. It can be considered as a measure of the universal relations
among commutators in $G$ that are not generated by the elementary relations
arising from commuting pairs. Its cohomological dual has applications in Noether problem \cite{Bog88}. 

\subsection{Polycyclic groups}
If $G$ is a (not necessarily finite) polycyclic group given by a polycyclic presentation, then the algorithms of Eick and Nickel \cite{Eic08} enable effective calculations of polycyclic presentations of the groups $\widehat{G}$, $G\wedge G$, $M(G)$, as well as explicit description of the crossed pairing  $\lambda :G\times G\rightarrow G\wedge G$ mapping $(g,h)$ to $g\wedge h$. These functions are part of the GAP package \texttt{Polycyclic}.

An algorithm for computing the Bogomolov multipliers of finite polycyclic groups was developed in \cite{Mor12} and further optimized in \cite{Jez14}. It crucially relies on the fact that, in finite groups, one can effectively enumerate the commuting pairs and therefore generate $M_0(G)$. Computations of Bogomolov multipliers of general finite groups can be done using GAP package \texttt{HAP}.

\section{Torsion-free nilpotent groups of class 2}
\label{s:class2B0}

\noindent
Let $G$ be a finitely generated torsion-free nilpotent group of class 2 and assume that its abelianization is also torsion-free. Denote $A=G/G'\cong\Z^d$ and $C=G'\cong \Z^c$. The commutator map $G\times G\rightarrow C$ induces a surjective homomorphism 
$\beta:\wedge^2A\twoheadrightarrow C$.
Denote $L_\beta=\ker\beta$ and $D_\beta=\langle a\wedge b\mid \beta(a\wedge b)=0\rangle$. The following is essentially contained in \cite[Lemma 5.1]{Bog88}. We include a proof for completeness.

\begin{proposition}
    \label{prop:B0class2}
    Under the above notation, we have that $B_0(G)\cong L_\beta/D_\beta$.
\end{proposition}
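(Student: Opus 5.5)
Our plan is to compute $G\wedge G$ explicitly and compare it with $\wedge^2 A$. We start from the natural surjection $\pi\colon G\wedge G\to A\wedge A=\wedge^2A$ induced by $G\to A$; here $A\wedge A$ is the nonabelian exterior square of an abelian group, which coincides with the ordinary exterior square $\wedge^2_{\Z}A$. The commutator map of $G$ factors as $\kappa=\beta\circ\pi$, because the commutator $[x,y]\in C$ depends only on the images of $x,y$ in $A$ ($G$ has class 2). The key claim is that $\pi$ is an isomorphism. Once we have it, $M(G)=\ker\kappa$ corresponds to $\ker\beta=L_\beta$. Furthermore, $M_0(G)=\langle x\wedge y\mid [x,y]=1\rangle$ maps onto $\langle \bar x\wedge\bar y\mid \beta(\bar x\wedge \bar y)=0\rangle=D_\beta$, since $[x,y]=\beta(\bar x\wedge\bar y)$ and every pair $a,b\in A$ lifts to $G$. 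Then $B_0(G)\cong L_\beta/D_\beta$.

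To prove that $\pi$ is injective, we use the standard exact sequence for a central extension $N\to G\to A$ with $N=C=G'$ (Brown--Loday / Ellis):
\[
N\otimes G^{ab}\ \longrightarrow\ G\wedge G\ \longrightarrow\ A\wedge A\longrightarrow 0,
\]
where the first map sends $n\otimes g\mapsto n\wedge g$. It therefore suffices to show that $c\wedge g=1$ in $G\wedge G$ for all $c\in C$ and $g\in G$. Alternatively, we can argue directly: $G\wedge G$ is a central extension of $G'$ by $M(G)$, so it is nilpotent of class at most 2. Since $c$ is a product of commutators, it is enough to treat $c=[x,y]$. The crossed-module identity gives $[x,y]\wedge g = (x\wedge y)\,{}^{g}(x\wedge y)^{-1}$, up to the convention for the action. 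Because $G$ acts on $G\wedge G$ through conjugation, and $g$ acts on $x\wedge y$ via $({}^gx)\wedge({}^gy)=x[x,g]^{-1}\wedge y[y,g]^{-1}$ with central correction terms, this reduces the problem to showing that $z\wedge h$ is trivial for central $z\in G'$. That is the same statement again, so this route is circular.

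The better route is to use torsion-freeness. The group $G\wedge G$ is finitely generated nilpotent. We will show that $C\wedge G$ (the image of $N\otimes A$) is a torsion subgroup that must be trivial, or compare ranks. To this end, we compute Hopf's formula via the five-term sequence $H_2(G)\to H_2(A)\to C\to H_1(G)\to H_1(A)\to 0$. Here $H_1(G)\to H_1(A)$ is an isomorphism because $G^{ab}=A$, so $\wedge^2A=H_2(A)\to C$ is the surjection $\beta$, and the image of $H_2(G)$ is $L_\beta$. The kernel of $H_2(G)\to H_2(A)$ is the image of $C\wedge G$.

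The main obstacle is exactly this step: showing that $c\wedge g=1$, that is, that $H_2(G)\to H_2(A)$ is injective. We would first attempt the identity $[x,y]\wedge g=(x\wedge y)^{-1}\cdot{}^{g}(x\wedge y)$ combined with expanding ${}^g(x\wedge y)=(x[x,g]^{-1})\wedge(y[y,g]^{-1})$ bilinearly modulo terms $C\wedge G$. The aim is to derive relations of the form $(c\wedge g)^{2}=1$ or a Jacobi-type relation $([x,y]\wedge g)([y,g]\wedge x)([g,x]\wedge y)=1$. From such relations it should follow that $C\wedge G$ is torsion, hence contained in the torsion of $H_2(G)$. If that fails, we would instead cite \cite[Lemma 5.1]{Bog88}, or argue that $H_2(G)$ of a torsion-free nilpotent group is detected rationally via its Malcev Lie algebra. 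There $H_2(\mathfrak g)=\ker(\wedge^2\mathfrak g\to\mathfrak g)/\ldots$, and the Chevalley--Eilenberg computation for a 2-step algebra yields a kernel of dimension $\dim L_\beta+\dim(C\otimes A)-\dim(\text{image of }d)$. Matching this with $\operatorname{rank}L_\beta$ would establish injectivity on the torsion-free part.
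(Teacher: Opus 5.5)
Your proposal has a genuine gap: its key claim is false. You want $\pi\colon G\wedge G\to\wedge^2A$ to be an isomorphism, equivalently $c\wedge g=1$ for $c\in C$, or $H_2(G,\mathbb Z)\to H_2(A,\mathbb Z)$ injective. None of your suggested routes can establish this.

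The Heisenberg group $H_3(\mathbb Z)=\langle x,y\rangle$ with $z=[x,y]$ is a counterexample:
\begin{itemize}
\item $\beta\colon\wedge^2\mathbb Z^2\to\mathbb Z$ is an isomorphism, so $L_\beta=0$.
\item On the other hand, $M(G)=H_2(H_3(\mathbb Z),\mathbb Z)\cong\mathbb Z^2$ (Poincar\'e duality for the Heisenberg nilmanifold). It is generated by $z\wedge x$ and $z\wedge y$.
\item These generators have infinite order, so no relation such as $(c\wedge g)^2=1$ can hold, and $C\wedge G$ is not torsion.
\item The Mal'cev comparison fails too, since $\dim H_2(\mathfrak h_3)=2\neq 0=\operatorname{rk}L_\beta$.
\item Falling back on a citation does not close the gap, because the statement you would be citing is not true.
\end{itemize}

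You never need $\ker\pi=0$. Since $C=G'$ is central, each $c\in C$ commutes with every $g\in G$. So every generator $c\wedge g$ of $\ker\pi$ (the image of $C\wedge G$ in your exact sequence) is the wedge of a commuting pair, and $\ker\pi\le M_0(G)$. Consequently $\pi$ induces an isomorphism
\[
M(G)/M_0(G)\;\cong\;\pi(M(G))/\pi(M_0(G)).
\]
If $w\in M(G)$ and $\pi(w)=\pi(m)$ with $m\in M_0(G)$, then $wm^{-1}\in\ker\pi\le M_0(G)$. Your own computations then finish the proof. First, $\pi(M(G))=L_\beta$, using surjectivity of $\pi$ and $\kappa=\beta\pi$. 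Second, $\pi(M_0(G))=D_\beta$, by lifting pairs $a,b\in A$.

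This is exactly what the paper does by passing at once to $G\cwedge G=(G\wedge G)/M_0(G)$. There the map $(gG',hG')\mapsto g\cwedge h$ is well defined precisely because $z\cwedge h$ vanishes for central $z$. This gives $G\cwedge G\cong\wedge^2A/D_\beta$, and taking kernels of the maps to $G'$ gives $B_0(G)\cong L_\beta/D_\beta$.

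In the Heisenberg example, the classes $z\wedge x$ and $z\wedge y$ are exactly what make $M(G)\neq L_\beta$, while still giving $B_0(G)=0$.
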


\begin{proof}[Sketch of proof]
    Denote $G\cwedge G=(G\wedge G)/M_0(G)$. Note that $B_0(G)=\ker (G\cwedge G\twoheadrightarrow G')$.
    The map $A\times A\rightarrow G\cwedge G$ given by $(gG',hG')\mapsto g\cwedge h$ is well defined and biadditive, hence it induces an epimorphism $\phi:\wedge^2A\twoheadrightarrow G\cwedge G$. We have that $D_\beta\le\ker\phi$, therefore $\phi$ factors as $\bar{\phi}: (\wedge^2A)/D_\beta\twoheadrightarrow G\cwedge G$. Conversely, the map $G\times G\to (\wedge^2A)/D_\beta$ given by $(g,h)\mapsto (gG'\wedge hG')+D_\beta$ is a $B_0$-crossed pairing \cite{Mor12}, hence it induces a homomorphism $\psi:G\cwedge G\to (\wedge^2A)/D_\beta$ that is the inverse of $\bar{\phi}$. Hence $G\cwedge G\cong (\wedge^2A)/D_\beta$. Under this isomorphism, the map $G\cwedge G\twoheadrightarrow G'$ sends $a\cwedge b$ to $\beta(aG'\wedge bG')$, hence its kernel is isomorphic to $L_\beta/D_\beta$.
\end{proof}

Note that $\beta$ can be computed from a pc presentation of $G$. If $G=\langle x_1,\ldots ,x_d,z_1,\ldots ,z_c\rangle$ with $z_1,\ldots ,z_c$ being a basis for $G'$, then one has $[x_i,x_j]=z_1^{b_{ij1}}\cdots z_c^{b_{ijc}}$. Given a basis $e_1,\ldots ,e_d$ of $A$, we have that the matrix $B$ for $\beta$ with respect to the bases $(e_i\wedge e_j)_{1\le i<j\le d}$ and $(z_k)_{1\le k\le c}$ is $B=(b_{ijk})_{1\le i<j\le d,1\le k\le c}$. The kernel $L_\beta$ of $B$ can be computed via the Smith normal form, so the difficult part remains to generate $D_\beta$.

\begin{example}
    Let $H_{2m+1}(\mathbb Z)$ denote the $(2m+1)$-dimensional integral
    Heisenberg group.
    We show that
    $B_0\bigl(H_{2m+1}(\mathbb Z)\bigr)=0$ for all $m\ge 1$.
    Let $V\cong\mathbb Z^{2m}$ have basis
    $e_1,\ldots,e_m,f_1,\ldots,f_m$.  The commutator map
    $\beta:\wedge^2 V\rightarrow\mathbb Z$
is determined by
\begin{align*}
     \beta(e_i\wedge f_j) &=\delta_{ij},\\
     \beta(e_i\wedge e_j) &= 0\\
     \beta(f_i\wedge f_j) &=0.
\end{align*}
Its kernel $L$ is generated by
 $e_i\wedge e_j$, $f_i\wedge f_j$,
 $e_i\wedge f_j$, where $i\neq j$,
and
 $e_i\wedge f_i-e_1\wedge f_1$, where
$2\leq i\leq m$.
The generators of the first three types are decomposable elements of
$L$.  Moreover,
$
 (e_i+e_1)\wedge(f_i-f_1)\in L$
is decomposable, and
$$
 (e_i+e_1)\wedge(f_i-f_1)
 =
 e_i\wedge f_i-e_1\wedge f_1
 -e_i\wedge f_1+e_1\wedge f_i.
$$
Since the final two terms are also decomposable elements of $L$, it
follows that
$e_i\wedge f_i-e_1\wedge f_1\in D_\beta$.
Thus $L=D_\beta$, and the result is proved.
\end{example}

One can reverse the above construction as follows. Let $V\cong \Z^d$, let $W$ be a free abelian group, and let $\beta:\wedge^2V\twoheadrightarrow W$ be an epimorphism. Fix an ordered basis $e_1,\ldots ,e_d$ of $V$ and define a bilinear map $f:V\times V\rightarrow W$ by 
$$f(a,b)=\sum_{i<j}a_ib_j\beta (e_i\wedge e_j),$$
where $a=\sum_ia_ie_i$ and $b=\sum_jb_je_j$. On the set $V\times W$ define
$$(a,u)(b,v)=(a+b,u+v+f(a,b)).$$
This defines a group $G_\beta$, and it is straightforward to show that $G_\beta$ is torsion-free nilpotent of class 2 with $(G_\beta)^{\rm ab}\cong V$, $(G_\beta)'\cong W$, and $\beta$ corresponds to the commutator map on $G_\beta$. Note that $L_\beta =\ker\beta$ is a saturated lattice in $\wedge^2 V$.
Conversely, if $L$ is an arbitrary saturated lattice in $\wedge^2 V$, then $W=\wedge^2 V/L$ is torsion-free and $L$ appears as the kernel of such commutator map $\beta:\wedge^2 V\twoheadrightarrow W$.

Put $V_\Q=V\otimes \Q$ and write an element $q\in V_\Q$ as
$$q=\sum_{i<j}q_{ij}(e_i\wedge e_j).$$
For each $i<j<k<\ell$ put 
$$p_{ijk\ell}(q)=q_{ij}q_{k\ell}-q_{ik}q_{j\ell}+q_{i\ell}q_{jk}.$$
The common zero locus of these quadrics is precisely the Pl\" ucker embedding $\Gr(2,V_\Q)\hookrightarrow \mathbb{P}(\wedge^2 V_\Q)$, see \cite[p. 64]{Har92}. 
Recall that a nonzero element of a lattice is \emph{primitive} if it cannot be expressed as an integer multiple (other than $\pm 1$) of another element of the same lattice. 

\begin{lemma}
    \label{lem:intdecomp}
    Let $V$, $W$ and $\beta$ be as above, and let $L=\ker\beta$.
    \begin{enumerate}
        \item If $q\in\wedge^2 V$ is primitive and satisfies all the Pl\" ucker relations $p_{ijk\ell}(q)=0$, then $q=u\wedge v$ for some $u,v\in V$-
        \item If $P$ is a rational line in $L_\Q$, then $P\cap L$ is an infinite cyclic group. Its primitive generator is also primitive in $\wedge^2 V$.
    \end{enumerate}
\end{lemma}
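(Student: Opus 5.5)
For (1), I will pass to $V_\Q$ and then descend back to the lattice. Since $q\neq 0$ satisfies all Plücker relations $p_{ijk\ell}(q)=0$, the point $[q]\in\mathbb P(\wedge^2V_\Q)$ lies on the Plücker image of $\Gr(2,V_\Q)$ by the result quoted from \cite{Har92}. Hence $q=x\wedge y$ for some linearly independent $x,y\in V_\Q$. If one prefers a direct argument, pick a pair $(i,j)$ with $q_{ij}\neq0$, put
\[
x=\sum_k q_{ik}e_k,\qquad y=\sum_k q_{jk}e_k
\]
(with $q_{kk}=0$ and $q_{lk}=-q_{kl}$), and use the Plücker relations to check that $x\wedge y=q_{ij}q$.

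Next let $U=(\Q x+\Q y)\cap V$. This is a saturated rank-two sublattice of $V$, because it is the intersection of $V$ with a $\Q$-subspace. Since $V/U$ is torsion-free, $U$ is a direct summand of $V$, so it has a basis $u,v$ that extends to a basis of $V$. Then $u\wedge v$ is part of the induced basis of $\wedge^2V$, and in particular $u\wedge v$ is primitive in $\wedge^2 V$.

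On the other hand, $\wedge^2 U_\Q$ is one-dimensional, so $q=\lambda\, u\wedge v$ for some $\lambda\in\Q$. I expect the key step to be showing $\lambda\in\Z$. To do this, express $x$ and $y$ through the intermediate element $q_{ij}x,q_{ij}y$ or, more cleanly, observe that the coordinates of $q$ are integers while those of $u\wedge v$ form a unimodular vector. The latter holds because $u\wedge v$ is a basis element, so some integer combination of its coordinates equals $1$. Taking the same integer combination of the coordinates of $q=\lambda\,u\wedge v$ gives $\lambda\in\Z$. Primitivity of $q$ then forces $\lambda=\pm1$, so $q=\pm u\wedge v$, and replacing $u$ by $-u$ if necessary yields $q=u\wedge v$.

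For (2), write $P=\Q p$ with $p\in L_\Q$. After clearing denominators we may take $p\in L$, so $P\cap L\neq 0$. The group $P\cap L$ is a subgroup of the lattice $\wedge^2V$ that lies in a rank-one $\Q$-space, so it is free of rank one, i.e.\ infinite cyclic. Let $g$ be its generator.

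Now suppose $g=m h$ with $h\in\wedge^2V$ and $m\ge 2$. Then $\beta(g)=m\beta(h)=0$ in $W$, and since $W$ is torsion-free this gives $\beta(h)=0$, so $h\in L$. Moreover $h=g/m\in P$, so $h\in P\cap L$, which contradicts the choice of $g$ as a generator. Hence $g$ is primitive in $\wedge^2 V$. This is the saturation of $L$ noted earlier, and the main obstacle in the whole lemma is the integrality argument in part (1).
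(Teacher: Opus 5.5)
Your proof is correct. Part (2) is essentially the paper's argument. The paper also clears denominators to see that $P\cap L\neq 0$, and then uses that $L=L_\Q\cap\wedge^2V$ is saturated; this is exactly your step $\beta(h)=0$, which follows from torsion-freeness of $W$. Your justification that $P\cap L$ is cyclic, because it lies in the lattice $\wedge^2V$, is if anything more careful than the paper's wording.

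Part (1) takes a different route.
\begin{itemize}
\item \textbf{The paper's route.} It applies Newman's skew-symmetric Smith normal form to write $q=s_1f_1\wedge f_2+\cdots+s_tf_{2t-1}\wedge f_{2t}$ in an adapted basis of $V$. Decomposability over $\Q$ makes the associated map $q^\sharp$ have rank $2$, which forces $t=1$, and primitivity then gives $s_1=1$.
\item \textbf{Your route.} You avoid any canonical form. You saturate the plane $\Q x+\Q y$ to a direct summand $U\le V$, so a basis $u,v$ of $U$ extends to a basis of $V$. Then $u\wedge v$ is a basis vector of $\wedge^2V$, and you read off $\lambda\in\Z$ from the dual coordinate functional.
\end{itemize}
Your version needs only that saturated sublattices are direct summands, so it is more elementary and self-contained. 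The paper's version is shorter once the classification of integral alternating tensors is available, and it places the lemma inside that classification.

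Your explicit formula $x\wedge y=q_{ij}q$, with $x=\sum_k q_{ik}e_k$ and $y=\sum_k q_{jk}e_k$, also proves decomposability over $\Q$ directly. This needs the relations $p_{ijk\ell}$ extended to all index quadruples by antisymmetry; the cases with repeated indices hold trivially. The paper simply asserts this step from the geometric description of the Pl\"ucker embedding. The aside about ``the intermediate element $q_{ij}x,q_{ij}y$'' is unnecessary and can be dropped.
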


\begin{proof}
    By the skew-symmetric version of the Smith normal form theorem \cite[Theorem IV.1]{New72}, there exist a basis $f_1,\ldots ,f_d$ of $V$  and positive integers $s_1\mid s_2\mid\cdots\mid s_t$ such that 
    $$q=s_1(f_1\wedge f_2)+s_2(f_3\wedge f_4)+\cdots + s_t(f_{2t-1}\wedge f_{2t}).$$
    As $q$ satisfies the Pl\" ucker relations, it is decomposable over $\Q$, so $q=\bar{u}\wedge\bar{v}$ for some $\Q$-linearly independent $\bar{u},\bar{v}\in V_\Q$. The corresponding map $q^\sharp: (V_\Q)^*\rightarrow V_Q$ maps $\lambda\in (V_\Q)^*$ to $\lambda(\bar{u})\bar{v}-\lambda(\bar{v})\bar{u}$, hence its rank is two. 
    This, in particular, shows that $t=1$, therefore $q=s_1(f_1\wedge f_2)$. But $q$ is primitive, hence $s_1=\pm 1$. This proves (1).

    It remains to prove (2). First we need to show that $P\cap L$ is nontrivial.  Let $\ell_1,\ldots ,\ell_k$ be a $\Z$-basis of $L$. Choose a nonzero $x\in P$ and write it as $x=a_1\ell_1+\cdots +a_k\ell_k$ for some $a_i\in\Q$. Then it is clear that there exists a positive integer $n$ such that $nx=m_1\ell_1+\cdots +m_k\ell_k$, where $m_i\in\Z$, hence $nx\in L\cap P$. Thus, $L\cap P$ is a nonzero subgroup of a one-dimensional rational vector space, hence it is infinite cyclic. To find its primitive generator, denote $g=\gcd(m_1,\ldots ,m_k)$ and set $q_P=(m_1/g)\ell_1+\cdots + (m_k/g)\ell_k$. Then $q_P$ is primitive in $L$. If $q_p=mw$ for some $w\in \wedge^2V$ with $|m|>1$, then $w\in P\cap \wedge^2V$. As $P\subset L_\Q$ and $L$ is saturated, $L=L_\Q\cap\wedge^2V$ and so $w\in L$, contradicting the primitivity of $q_P$ in $P\cap L$.
\end{proof}

Given a sublattice $L$ of $\wedge^2V$, set
$$X_L=\mathbb{P}(L_\Q)\cap \Gr(2,V_\Q).$$
$X_L$ is a projective variety over $\Q$ defined by the linear equations defining $L_\Q$ and the Pl\"ucker relations restricted to $\mathbb{P}(L_\Q)$. Let $X_L(\Q)$ be the set of all $\Q$-rational points of $X_L$.

\begin{proposition}
    \label{prop:XL}
    There is a bijection between $X_L(\Q)$ and the rank-one rational subspaces of $L_\Q$ generated by nonzero decomposable elements.  Furthermore, if $P\in X_L(\Q)$ and $q_P$ is the primitive integral generator of $P\cap L$, then
    $$D_\beta =\langle q_P\mid P\in X_L(\Q)\rangle_\Z.$$
    In particular, we have that $D_\beta$ is non-trivial if and only if $X_L(\Q)$ is nonempty.
\end{proposition}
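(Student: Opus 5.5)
The bijection is essentially a matter of unwinding definitions. A $\Q$-rational point of $X_L$ is a point of $\mathbb{P}(L_\Q)$, that is, a rank-one rational subspace $P\subseteq L_\Q$, whose coordinates satisfy the Pl\"ucker relations $p_{ijk\ell}=0$. Since these relations cut out the Pl\"ucker embedding of $\Gr(2,V_\Q)$, a nonzero rational vector satisfies them exactly when it is decomposable over $\Q$, i.e.\ of the form $\bar u\wedge\bar v$ with $\bar u,\bar v\in V_\Q$. So I would send $P\in X_L(\Q)$ to the line $P$ itself. Conversely, a line spanned by a nonzero decomposable element of $L_\Q$ satisfies the quadrics and therefore lies on $X_L$. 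To connect with integral decomposables, I would note that a rational decomposable can be scaled to $n\bar u\wedge m\bar v$ with $n\bar u, m\bar v\in V$. Hence decomposability over $\Q$ and decomposability of a nonzero integral multiple are equivalent.

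For the generation statement I would prove both inclusions.

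For ``$\supseteq$'': take $P\in X_L(\Q)$. By Lemma~\ref{lem:intdecomp}(2), $P\cap L$ is infinite cyclic with primitive generator $q_P$, and $q_P$ is also primitive in $\wedge^2V$. Since $P$ is a point of the Grassmannian section, $q_P$ satisfies all Pl\"ucker relations, because they are homogeneous. Lemma~\ref{lem:intdecomp}(1) then gives $q_P=u\wedge v$ with $u,v\in V$. Moreover $\beta(q_P)=0$ because $q_P\in L$, so $q_P$ is one of the generators of $D_\beta$.

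For ``$\subseteq$'': let $a\wedge b$ be a generator of $D_\beta$, so $a\wedge b\in L$. If it is zero there is nothing to prove. Otherwise $P=\Q(a\wedge b)$ is a rational line in $L_\Q$ spanned by a decomposable element, so $P\in X_L(\Q)$. Since $a\wedge b\in P\cap L=\Z q_P$, we get $a\wedge b=kq_P$ for some $k\in\Z$, and hence $a\wedge b\in\langle q_P\rangle$.

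The final claim follows at once. If $X_L(\Q)=\emptyset$, the generating set is empty and $D_\beta=0$. If some $P$ exists, then $q_P\neq0$ lies in $D_\beta$.

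There is no serious obstacle, since the integrality issue (a primitive element satisfying the Pl\"ucker relations is an integral decomposable) has already been isolated in Lemma~\ref{lem:intdecomp}. The one point requiring care is making the identification between rational points of $\mathbb{P}(L_\Q)$ satisfying the Pl\"ucker relations and lines spanned by decomposable vectors precise. This amounts to quoting the standard fact that the Pl\"ucker quadrics cut out exactly the decomposable vectors, applied over $\Q$ (\cite[p.~64]{Har92}).
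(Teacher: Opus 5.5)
Your proof is correct and follows the paper's argument. The bijection is read off from the definition of $X_L$. The inclusion $D_\beta\subseteq\langle q_P\rangle$ comes from writing a nonzero decomposable $a\wedge b\in L$ as an integer multiple of $q_P$ with $P=\Q(a\wedge b)$. The reverse inclusion comes from each $q_P$ being an integral decomposable element of $L$. On that last point you are more explicit than the paper, which simply asserts it: you derive it from Lemma~\ref{lem:intdecomp}(2) ($q_P$ is primitive in $\wedge^2V$, using saturation of $L=\ker\beta$) together with Lemma~\ref{lem:intdecomp}(1).
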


\begin{proof}
    By definition, $\Q$-rational points of $\mathbb{P}(L_\Q)$ are precisely one-dimensional $\Q$-subspaces $P=\Q q$ of $L_\Q$, $q\neq 0$. Such a point belongs to $\Gr(2,V_{\mathbb Q})$ if and
    only if $q$ is decomposable over $\mathbb Q$. Thus,
    $$X_L(\mathbb Q)=\{
    \mathbb Qq\mid 
    0\neq q\in L_{\mathbb Q},\
    q=u\wedge v
    \text{ for some }u,v\in V_{\mathbb Q}\}.$$
    Let $z\in L$ be any nonzero integral decomposable element.
    Then $P=\mathbb Qz\in X_L(\mathbb Q)$.  
    Since $P\cap L=\mathbb Zq_P$, there is an integer $n\neq 0$ such that
    $z=nq_P$.
    It follows that every integral decomposable element of $L$ belongs to
    the subgroup generated by the $q_P$.  The reverse inclusion holds
    because every $q_P$ is itself an integral decomposable element of
    $L$.  Therefore,
    $D_\beta=\langle
    q_P:P\in X_L(\mathbb Q)\rangle_{\mathbb Z}$.
\end{proof}

\begin{corollary}
    \label{cor:rankcriterion}
    We have that $X_L(\Q)\neq \emptyset$ if and only if the rank of $L/D_\beta$ is strictly smaller than the rank of $L$.
\end{corollary}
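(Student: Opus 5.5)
The plan is to reduce the rank statement to the final sentence of Proposition~\ref{prop:XL}, which says that $D_\beta\neq 0$ if and only if $X_L(\Q)\neq\emptyset$. Here $L=L_\beta=\ker\beta$ is a subgroup of the free abelian group $\wedge^2V$ of finite rank, so $L$ is free abelian of finite rank. Every subgroup of $L$ is then also free of finite rank. Ranks are additive on the short exact sequence
$$0\to D_\beta\to L\to L/D_\beta\to 0,$$
which can be seen by tensoring with $\Q$, since $\Q$ is flat over $\Z$. This gives
$$\operatorname{rank}(L/D_\beta)=\operatorname{rank}L-\operatorname{rank}D_\beta .$$
Hence $\operatorname{rank}(L/D_\beta)<\operatorname{rank}L$ holds if and only if $\operatorname{rank}D_\beta>0$.

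Next I would note that $D_\beta$ is a subgroup of the torsion-free group $L$. So $D_\beta$ has positive rank exactly when it is nonzero. Combining this with Proposition~\ref{prop:XL} gives the chain
$$\operatorname{rank}(L/D_\beta)<\operatorname{rank}L \iff D_\beta\neq 0\iff X_L(\Q)\neq\emptyset.$$

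For a self-contained check of the last equivalence, take a point $P\in X_L(\Q)$. By Lemma~\ref{lem:intdecomp}(2), $P$ yields a nonzero primitive element $q_P\in L$. This element is decomposable over $\Q$, so it satisfies the Pl\"ucker relations. By Lemma~\ref{lem:intdecomp}(1) it is then an integral decomposable element $u\wedge v$ of $L$, and it is a nonzero element of $D_\beta$. Conversely, any nonzero $u\wedge v\in L$ spans a rational point $\Q(u\wedge v)$ of $X_L$.

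There is no real obstacle. The only point needing care is that rank additivity requires $D_\beta$ to be finitely generated, and this holds because $D_\beta$ is a subgroup of the finitely generated free abelian group $L$.
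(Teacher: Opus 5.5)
Your proposal is correct and follows the paper's proof. You tensor $0\to D_\beta\to L\to L/D_\beta\to 0$ with the flat $\mathbb{Z}$-module $\Q$ to get $\operatorname{rank}(L/D_\beta)=\operatorname{rank}L-\operatorname{rank}D_\beta$, then invoke the final assertion of Proposition~\ref{prop:XL} that $D_\beta\neq 0$ if and only if $X_L(\Q)\neq\emptyset$. Your extra verification of that equivalence via Lemma~\ref{lem:intdecomp} is also sound, and the finite-generation caveat is harmless but unnecessary, since $\dim_\Q(-\otimes\Q)$ is additive on short exact sequences of arbitrary abelian groups.
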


\begin{proof}
    We tensor the short exact sequence of abelian groups
    $$0\rightarrow D_\beta\rightarrow L\rightarrow L/D_\beta\rightarrow 0$$
    with $\Q$. Because $\Q$ is flat over $\Z$, we get a short exact sequence of $\Q$-spaces
    $$0\rightarrow D_\beta\otimes \Q \rightarrow L\otimes \Q\rightarrow L/D_\beta \otimes \Q\rightarrow 0,$$
    and the result follows from Proposition \ref{prop:XL}.
\end{proof}

We note an algorithmic consequence of the above observations. Fix an integer $d\ge 2$. Suppose there is an unconditional terminating algorithm, which, for every integral alternating map
$\beta:\wedge^2\mathbb Z^d\longrightarrow\mathbb Z^c$
with saturated kernel $L$, computes any one of the following:
\begin{enumerate}
\item the subgroup $D_\beta$;
\item the isomorphism type of $L/D_\beta$;
\item the integer $\operatorname{rk}(L/D_\beta)$.
\end{enumerate}
Then rational-point existence is decidable for every rational linear
section $X_U=\mathbb P(U)\cap\Gr(2,d)$, where $U\leq\wedge^2\mathbb Q^d$. Namely, $L=U\cap\wedge^2\Z^d$ is saturated in $\wedge^2\Z^d$, and hence $C=\wedge^2\Z^d/L$ is free abelian. Let $\beta:\wedge^2\Z^d\twoheadrightarrow C$ be the quotient map. Then $\ker\beta =L$ and $X_U=X_L$. So each of the proposed outputs decides whether $X_U(\Q)$ is empty.

Consider the special case $d=5$, and suppose the rank of $L$ is equal to 5. Then a smooth transverse section $X_L=\mathbb{P}(L_\Q)\cap\Gr(2,5)\cong\mathbb{P}^4\cap \Gr(2,5)$ is a genus-one normal quintic \cite[Proposition 1.4]{Fis06}. Conversely, every genus-one normal quintic is a transverse linear
section of $\operatorname{Gr}(2,5)$, see \cite[Lemma 2.6]{SZ20}. No unconditional terminating algorithm is presently known which,
given an arbitrary genus-one normal quintic $C$ defined over $\Q$, 
decides whether $C(\mathbb Q)$ is nonempty.  Such a curve is a torsor under its
Jacobian $E$, and its degree-five polarization implies that its
Weil--Ch\^atelet class is either trivial or has order five.  In the
everywhere locally soluble case the class therefore belongs to
$\Sha(E/\mathbb Q)[5]$.  Deciding whether the curve has a rational
point is exactly the problem of deciding whether this class is zero.
The standard descent procedure is known to terminate under the
finiteness conjecture for Tate--Shafarevich groups; see
\cite[\S4.2]{Poo02} and
\cite[Remark~5.3]{Bak05}.  No unconditional uniform termination
theorem is currently known, including for degree-five Pfaffian models.

On a positive side, the low-dimensional cases $d\le 4$ behave better. If $d\le 3$, then every element of $\wedge^2V$ is decomposable, therefore $L_\beta=D_\beta$ and $B_0(G)=0$. Let $d=4$. Then decomposability is given by a single Pl\" ucker relation $q_{12}q_{34}-q_{13}q_{24}+q_{14}q_{23}=0$. Thus, $X_L$ is a linear section of a quadric. Rational-point existence for quadrics over $\mathbb Q$ is decidable by the Hasse–Minkowski theorem. This suggests that at least the rank of $L/D_\beta$ is effectively accessible in dimension four.

\begin{example}
Let $N_{4,2}$ be the free nilpotent group of class two on
$x_1,x_2,x_3,x_4$, and put
$$
 G=N_{4,2}/\langle [x_1,x_2][x_3,x_4]\rangle.
$$
The relator is central.  The quotient is torsion-free: on the central free
abelian group $N_{4,2}'\cong\wedge^2\Z^4$ it kills the primitive vector
$e_{12}+e_{34}$.  We have
 $G^{\rm ab}\cong\Z^4$, $G'\cong\Z^5$, and
 $L=\Z(e_{12}+e_{34})$.
$$(e_{12}+e_{34})\wedge(e_{12}+e_{34})
   =2e_{1234}\ne 0,$$
so the generator of $L$ is not decomposable.  By
Lemma \ref{lem:intdecomp}, we conclude that
  $D_\beta=0$, hence $B_0(G)\cong\Z$.
\end{example}

\section{Polynomially parametrized commuting pairs}
\label{s:poly}

\noindent
In the sequel, we need some facts on the polynomial maps between abelian groups, we refer to \cite{GT12,HK18,Lei02}. Let $B$ be an arbitrary abelian group.
If $P:\Z^d\rightarrow B$ is a map and $\bm{h}\in\Z^d$, define the difference map $\partial_{\bm{h}}P:\Z^d\rightarrow B$ by the rule $(\partial_{\bm{h}}P)(\bm{z})=P(\bm{z}+\bm{h})-P(\bm{z})$. We say that $P$ is a \emph{polynomial of degree $\le \ell$} if $\partial_{\bm{h}_{\ell+1}}\partial_{\bm{h}_{\ell}}\cdots \partial_{\bm{h}_{1}}P=0$ for all $\bm{h}_1,\ldots ,\bm{h}_{r+1}\in\Z^d$.

It is convenient to have a characterization of polynomial maps that helps immediately recognize them.
For $\bm{z}=(z_1,\ldots ,z_k)\in \Z^d$ and multi-indices $\bm{\alpha}=(\alpha_1,\ldots ,\alpha_k)\in\N^d$, set $|\bm{\alpha}| = \alpha_1+\cdots +\alpha_k$ and
    $${\bm{z}\choose \bm{\alpha}} =\prod_{i=1}^d {z_i\choose \alpha_i}.$$
Also define
    $$\partial^{\bm{\alpha}} = \partial_{e_1}^{\alpha_1}\cdots\partial_{e_d}^{\alpha_d},$$
where $e_1,\ldots ,e_d$ is the standard basis of $\Z^d$.
The following result is the Taylor expansion of polynomial maps, see, for example, Green--Tao \cite[\S8]{GT12}. We give a short proof for the sake of completeness.

\begin{proposition}
    \label{prop:newton}
    For a map $P:\Z^d\rightarrow B$, the following are equivalent.
    \begin{enumerate}
        \item $P$ is a polynomial map of degree $\le \ell$.
        \item There exist uniquely determined elements $p_{\bm{\alpha}}\in B$, where $\bm{\alpha}\in\N^d$, $|\bm{\alpha}|\le \ell$, such that
        $$P(\bm{z})=\sum_{|\bm{\alpha}|\le \ell}{\bm{z} \choose \bm{\alpha}}p_{\alpha}$$
        for all $\bm{z}\in\Z^d$. Moreover, $p_{\bm{\alpha}}=(\partial^{\bm{\alpha}}P)(\bm{0})$.
    \end{enumerate}
\end{proposition}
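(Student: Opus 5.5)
The plan is to prove (2)$\Rightarrow$(1) by a direct computation, and (1)$\Rightarrow$(2) by induction on $\ell$, using the fact that $\partial_{e_i}$ acts on binomial monomials as a lowering operator.

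\textbf{Basic identities.} First record the one-variable identity
$$\binom{z+1}{a}-\binom{z}{a}=\binom{z}{a-1},$$
with the convention $\binom{z}{-1}=0$. It gives
$$\partial_{e_i}\binom{\bm z}{\bm\alpha}=\binom{\bm z}{\bm\alpha-e_i}.$$
Next use the composition rule
$$\partial_{\bm h+\bm k}P=\partial_{\bm h}P+\partial_{\bm k}P+\partial_{\bm k}\partial_{\bm h}P,$$
which follows from $(\partial_{\bm h+\bm k}P)(\bm z)=P(\bm z+\bm h+\bm k)-P(\bm z)$. Together with $\partial_{-\bm h}P=-\partial_{\bm h}P(\,\cdot-\bm h)$ and the fact that translates of a degree $\le m$ map have degree $\le m$, this shows the following: if all $\partial_{e_i}P$ have degree $\le m$ and $m\ge 0$, then every $\partial_{\bm h}P$ has degree $\le m$. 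This is proved by induction on $|\bm h|_1$, using that the degree-$\le m$ maps form a group closed under translation and under further differences. A degree $-1$ map means the zero map. The base case $m=-1$ needs its own treatment: if every $\partial_{e_i}P=0$, then $P$ is constant.

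\textbf{(2)$\Rightarrow$(1).} By the lowering identity, each application of some $\partial_{e_i}$ strictly lowers $|\bm\alpha|$. By the composition rule, a general $\partial_{\bm h}$ is a $\Z$-combination of products of the $\partial_{e_i}^{\pm}$ and translations, each lowering degree by at least one. So $\ell+1$ differences of the sum kill it. Put cleanly: show by induction on $\ell$ that $\partial_{\bm h}\binom{\bm z}{\bm\alpha}$ is a $\Z$-combination of $\binom{\bm z}{\bm\beta}$ with $|\bm\beta|<|\bm\alpha|$. This holds because $\binom{\bm z+\bm h}{\bm\alpha}$ expands by Vandermonde as $\sum_{\bm\beta\le\bm\alpha}\binom{\bm h}{\bm\alpha-\bm\beta}\binom{\bm z}{\bm\beta}$, and the $\bm\beta=\bm\alpha$ term cancels. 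This Vandermonde step is the cleanest route, and I would use it rather than the composition rule for this direction.

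\textbf{(1)$\Rightarrow$(2), existence.} Induct on $\ell$. For $\ell=0$, $\partial_{\bm h}P=0$ for all $\bm h$, so $P$ is constant. For $\ell\ge1$, set
$$Q(\bm z)=P(\bm z)-\sum_{|\bm\alpha|=\ell}\binom{\bm z}{\bm\alpha}(\partial^{\bm\alpha}P)(\bm 0).$$
For $|\bm\alpha|=\ell$, the map $\partial^{\bm\alpha}P$ is constant, since one more difference kills it. By the lowering identity, $\partial^{\bm\alpha}$ of the subtracted sum equals $(\partial^{\bm\alpha}P)(\bm0)$. Hence $\partial^{\bm\alpha}Q=0$ for all $|\bm\alpha|=\ell$.

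The main obstacle is to conclude from this that $Q$ has degree $\le\ell-1$, i.e.\ that arbitrary differences $\partial_{\bm h_1}\cdots\partial_{\bm h_\ell}Q$ vanish, not just coordinate ones. Here I would use the Vandermonde expansion above. Each $\partial_{\bm h}$ is an integer combination of monomials $\partial^{\bm\gamma}$ with $|\bm\gamma|\ge1$, all applied at shifted points; equivalently, use the translation-invariance argument from the basic-identities paragraph. Differences commute, so $\partial_{\bm h_1}\cdots\partial_{\bm h_\ell}Q$ is a combination of shifts of $\partial^{\bm\gamma}Q$ with $|\bm\gamma|\ge\ell$. Each such term vanishes: when $|\bm\gamma|>\ell$ this follows from $\deg Q\le\ell$, which holds because both $P$ and the binomial sum have degree $\le\ell$. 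Then induction applies to $Q$, and adding back the top-degree terms gives the expansion.

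\textbf{Uniqueness and the coefficient formula.} Apply $\partial^{\bm\beta}$ to the expansion and evaluate at $\bm 0$. Since $\partial^{\bm\beta}\binom{\bm z}{\bm\alpha}=\binom{\bm z}{\bm\alpha-\bm\beta}$, which is zero unless $\bm\beta\le\bm\alpha$ componentwise, and $\binom{\bm 0}{\bm\gamma}=\delta_{\bm\gamma,\bm0}$, this yields $(\partial^{\bm\beta}P)(\bm0)=p_{\bm\beta}$. This simultaneously proves uniqueness.
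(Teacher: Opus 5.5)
Your proof is correct. For (2)$\Rightarrow$(1), and for uniqueness together with the coefficient formula, it matches the paper's argument: Chu--Vandermonde lowering of $\binom{z}{\alpha}$, then applying $\partial^{\beta}$ and evaluating at $0$.

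For existence in (1)$\Rightarrow$(2) you take a different route. The paper does not peel off the top-degree part. Instead it forms the whole candidate $Q=\sum_{|\alpha|\le\ell}\binom{z}{\alpha}(\partial^{\alpha}P)(0)$ and sets $R=P-Q$. It notes that $R$ has degree $\le\ell$ and $(\partial^{\beta}R)(0)=0$ for all $|\beta|\le\ell$. It then proves $R=0$ by induction on $\ell$, applied to $S_i=\partial_{e_i}R$. Since $S_i$ is automatically of degree $\le\ell-1$ in the full sense of the definition (differences in arbitrary directions), the induction hypothesis applies directly. This gives $S_i=0$, so $R$ is constant and therefore zero.

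This sidesteps what you identify as the main obstacle: showing that $\partial^{\alpha}Q=0$ for all $|\alpha|=\ell$ forces $\partial_{h_1}\cdots\partial_{h_\ell}Q=0$ for arbitrary $h_j$. Your treatment of that step is sound. The version to write up is your $|h|_1$-induction based on $\partial_{h+k}=\partial_h+\partial_k+\partial_k\partial_h$ and $\partial_{-h}P=-(\partial_hP)(\cdot-h)$. Equivalently, $\partial_h$ is a $\Z$-combination of operators $T_v\partial^{\gamma}$ with $|\gamma|\ge1$, where $T_v$ is translation by $v$.

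The Vandermonde identity for binomial coefficients is not itself this operator statement. Its operator analogue $T_h=\prod_i(1+\partial_{e_i})^{h_i}$ gives a finite expansion in the $\partial^{\gamma}$ only when $h$ has nonnegative coordinates. Negative coordinates are handled by the shifts you mention.

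What your route buys is an explicit byproduct: degree $\le\ell$ can be detected by coordinate differences alone. That is, $P$ has degree $\le\ell$ if and only if $\partial^{\alpha}P=0$ for all $|\alpha|=\ell+1$. The paper's route is shorter precisely because it never needs this.
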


\begin{proof}
    First we observe that the Chu--Vandermonde identity
    $${z_i+v_i\choose \alpha_i}=\sum_{\gamma_i=0}^{\alpha_i}{v_i\choose \gamma_i}{z_i\choose \alpha_i-\gamma_i}$$
    yields
    $${\bm{z}+\bm{v}\choose \bm{\alpha}}=\sum_{\bm{\gamma}\le \bm{\alpha}}{\bm{v}\choose\bm{\gamma}}{\bm{z}\choose\bm{\alpha}-\bm{\gamma}},$$
    where the ordering $\bm{\gamma}\le \bm{\alpha}$ means $\gamma_i\le\alpha_i$ for all $i$. It follows by a straightforward calculation that
    $$\partial_{\bm{v}}\binom{\bm{z}}{\bm{\alpha}}
    =
    \sum_{\substack{\bm{\gamma}\leq\bm{}\alpha\\ \bm{\gamma}\neq 0}}
   \binom{\bm{v}}{\bm{\gamma}}
   \binom{\bm{z}}{\bm{\alpha}-\bm{\gamma}}.$$
    Thus the difference operator lowers the total degree of the multi-index binomial. This implies that the map $P$ defined in (2) is polynomial of degree $\le \ell$.

    Conversely, Let $P:\Z^d\rightarrow B$ be a polynomial map of degree $\le \ell$. For every $\bm{\alpha}$ with $|\bm{\alpha}|\le\ell$ define $p_{\bm{\alpha}}=(\partial^{\bm{\alpha}}P)(\bm{0})$ and set
    $$Q(\bm{z})=\sum_{|\bm{\alpha}|\le \ell}{\bm{z} \choose \bm{\alpha}}p_{\alpha}.$$
    One can easily show that
    $$
    \partial^{\bm{\beta}}\binom{\bm{z}}{\bm{\alpha}}
    =
    \left\{
    \begin{array}{ccc}
    \binom{\bm{z}}{\bm{\alpha}-\bm{\beta}}
       & : & \beta\leq\alpha
        \\
        0  & : & \text{otherwise}.
    \end{array}
    \right .
    $$
    This implies that $(\partial^{\bm{\beta}}Q)(0)=(\partial^{\bm{\beta}}P)(0)$ for all $|\bm{\beta}|\le\ell$. The map $R=P-Q$ is a polynomial of degree $\le\ell$ and satisfies $(\partial^{\bm{\beta}}R)(0)=0$ for all $|\bm{\beta}|\le\ell$. We prove by induction on $\ell$ that this implies $R=0$, the case $\ell=0$ being obvious. Assume the claim holds in degree $\le \ell-1$. For each $i=1,\ldots ,d$ put $S_i=\partial_{e_i}R$. The maps $S_i$ are polynomials of degree $\le\ell-1$ and, for $|\bm{\beta}|\le\ell-1$, we have $(\partial^{\bm{\beta}}S_i)(0)=(\partial^{\bm{\beta+e_i}}R)(0)=0$. The induction hypothesis implies $S_i=0$, hence $R(\bm{z}+e_i)=R(\bm{z})$ for all $i=1,\ldots ,d$ and all $\bm{z}\in\Z^d$. This quickly implies $R=0$. 
    Uniqueness of $p_{\bm{\alpha}}$ is straightforward.
\end{proof}

The following is an immediate corollary of \cite[Proposition 1.15]{Lei02}, it also follows from Proposition \ref{prop:newton}:

\begin{proposition}
    \label{prop:fingen}
    Let $P:\Z^d\rightarrow B$ be a polynomial map of degree $\ell$. Then
    $$\langle P(\bm{z})\mid \bm{z}\in\Z^d\rangle =\langle P(\bm{\alpha})\mid \bm{\alpha}\in\N^d, |\bm{\alpha}|\le \ell\rangle.$$
\end{proposition}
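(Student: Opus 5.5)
The inclusion $\supseteq$ is trivial, because each $P(\bm{\alpha})$ is a value of $P$. The work is in the reverse inclusion, and I will get it from Proposition~\ref{prop:newton}. That proposition writes $P(\bm{z})=\sum_{|\bm{\alpha}|\le\ell}\binom{\bm{z}}{\bm{\alpha}}p_{\bm{\alpha}}$, where $p_{\bm{\alpha}}=(\partial^{\bm{\alpha}}P)(\bm{0})$ and each $\binom{\bm{z}}{\bm{\alpha}}$ is an integer. Hence every value $P(\bm{z})$ lies in $\langle p_{\bm{\alpha}}\mid |\bm{\alpha}|\le\ell\rangle_\Z$. It therefore suffices to show that each Taylor coefficient $p_{\bm{\alpha}}$ is an integer combination of the values $P(\bm{\beta})$ with $\bm{\beta}\in\N^d$ and $|\bm{\beta}|\le\ell$.

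I will prove this by expanding the iterated difference operator. Write $\partial_{e_i}=\sigma_i-1$, where $\sigma_i$ is the shift operator $(\sigma_iP)(\bm{z})=P(\bm{z}+e_i)$. The shifts commute, so the binomial theorem gives
$$\partial^{\bm{\alpha}}=\prod_{i=1}^d(\sigma_i-1)^{\alpha_i}=\sum_{\bm{\beta}\le\bm{\alpha}}(-1)^{|\bm{\alpha}|-|\bm{\beta}|}\binom{\bm{\alpha}}{\bm{\beta}}\,\sigma^{\bm{\beta}}.$$
Evaluating at $\bm{0}$ yields
$$p_{\bm{\alpha}}=\sum_{\bm{\beta}\le\bm{\alpha}}(-1)^{|\bm{\alpha}|-|\bm{\beta}|}\binom{\bm{\alpha}}{\bm{\beta}}P(\bm{\beta}).$$
Every $\bm{\beta}\le\bm{\alpha}$ lies in $\N^d$ and satisfies $|\bm{\beta}|\le|\bm{\alpha}|\le\ell$, so $p_{\bm{\alpha}}$ lies in the right-hand subgroup, and the claimed equality follows.

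There is no real obstacle. The one point to check is that the operator identity is valid for maps into an arbitrary abelian group $B$. It is: only integer coefficients appear, so the expansion is simply the binomial theorem in the commutative ring $\Z[\sigma_1,\ldots,\sigma_d]$ acting on $B^{\Z^d}$. As an alternative, one could cite \cite[Proposition 1.15]{Lei02} directly, but the argument above keeps the proof self-contained using Proposition~\ref{prop:newton}.
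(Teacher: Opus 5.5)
Your argument is correct and follows the route the paper indicates. The paper gives no separate proof; it only notes that the statement follows from \cite[Proposition 1.15]{Lei02} or from the Taylor expansion in Proposition~\ref{prop:newton}, and your inversion formula $p_{\bm{\alpha}}=\sum_{\bm{\beta}\le\bm{\alpha}}(-1)^{|\bm{\alpha}|-|\bm{\beta}|}\binom{\bm{\alpha}}{\bm{\beta}}P(\bm{\beta})$ supplies exactly the detail that makes this deduction explicit.
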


Suppose  $G$ has a normal torsion-free subgroup $H$ of finite index. This is true, for example, for
polycyclic-by-finite groups \cite[Chapter 1, Proposition 2]{Seg05}. Furthermore, if $G$ is virtually nilpotent (abelian), then $H$ can be chosen to be nilpotent (abelian). Write $Q=G/H$ and choose a section $s:Q\rightarrow G$, $q\mapsto s_q$, with $s_1=1$. Given $q,r\in Q$, put
$$\mathcal{C}_{q,r}=\{(hs_q,ks_r)\in Hs_q\times Hs_r\mid [hs_q,ks_r]=1\}.$$
Note that the set $\mathcal{C}_{q,r}$ may be empty. 

\begin{definition}
    A \emph{polynomial cover of the commuting loci} of $G$ consists, for each $q,r\in Q$ with $\mathcal{C}_{q,r}\neq\emptyset$, of a list of maps $$\phi_{q,r,j}:\Z^{d_{q,r,j}}\rightarrow \mathcal{C}_{q,r},$$ and nonnegative integers $\ell_{q,r,j}$, $j=1,\ldots ,t_{q,r}$, such that the union of the images of $\phi_{q,r,j}$ equals $\mathcal{C}_{q,r}$, and the maps
    $$\theta_{q,r,j}:\Z^{d_{q,r,j}}\rightarrow M(G)$$
    given by $\theta_{q,r,j}(\bm{z})=\operatorname{pr}_1(\phi_{q,r,j}(\bm{z}))\wedge \operatorname{pr}_2(\phi_{q,r,j}(\bm{z}))$ are polynomial of degree $\le\ell_{q,r,j}$.
\end{definition}

Suppose $G$ admits a polynomial cover of its commuting loci. Let $x,y\in G$ commute. Write $x=hs_q$ and $y=ks_r$, so $(x,y)\in\mathcal{C}_{q,r}$. There exists $j$ such that $(x,y)=\phi_{q,r,j}(\bm{z}_0)$ for some $\bm{z}_0\in\Z^{d_{q,r,j}}$. By Proposition \ref{prop:fingen}, we have that $$x\wedge y=\theta_{q,r,j}(\bm{z}_0)\in \langle \theta_{q,r,j}(\bm{\alpha})\mid |\bm{\alpha}|\le \ell_{q,r,j}\rangle .$$
This implies that $M_0(G)$ is generated by all $\theta_{q,r,j}(\bm{\alpha})$, where $q,r,\in Q$, $j=1,\ldots ,t_{q,r}$, $|\bm{\alpha}|\le \ell_{q,r,j}$. As $Q$ is finite, this set of generators is finite. In particular, the Bogomolov multiplier of a polycyclic group admitting a polynomial cover of its commuting loci can be effectively computed.

From here on we restrict to the case when $G$ is finitely generated nilpotent-by-finite. Without loss of generality, $H$ is torsion-free nilpotent of class $c$. Fix a Mal'cev basis of $\mathcal{H}=(h_1,\ldots ,h_d)$ of $H$. We have a bijection $\mu_{\mathcal{H}}:\Z^d\rightarrow H$ given by $\mu_{\mathcal{H}}(\bm{z})=h_1^{z_1}\cdots h_d^{z_d}$.

A map $a:\phi:\Z^d\rightarrow \Z^n$ is said to have \emph{coordinate degree at most $\ell$} if each coordinate is an integer-valued polynomial of total degree $\le \ell$.

\begin{proposition}
    \label{prop:wedgefromcoord}
    Let $G$ be finitely generated nilpotent-by-finite and $H$ a torsion-free nilpotent normal subgroup of $G$ of finite index. Let $c$ be the nilpotency class of $H$.
    Fix $q,r\in G/H$ and suppose that $\mathcal{C}_{q,r}\neq\emptyset$. There exists an effectively computable integer $E_{q,r}\le c+1$ with the following property. Suppose we have a map $\phi:\Z^e\rightarrow\mathcal{C}_{q,r}$ given by $\phi(\bm{z})=(\mu_{\mathcal{H}}(a(\bm{z}))s_q,\mu_{\mathcal{H}}(b(\bm{z}))s_r)$, where $a,b:\Z^e\rightarrow \Z^d$ have coordinate degree $\le \ell$. Then the map $\theta:\Z^e\rightarrow M(G)$ given by
    $$\theta(\bm{z})=\mu_{\mathcal{H}}(a(\bm{z}))s_q\wedge \mu_{\mathcal{H}}(b(\bm{z}))s_r$$
    is a polynomial map of degree $\le \ell E_{q,r}$.
\end{proposition}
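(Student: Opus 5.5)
The plan is to view the wedge as a polynomial in the Mal'cev coordinates of $h=\mu_{\mathcal H}(\bm a)$ and $k=\mu_{\mathcal H}(\bm b)$, on the whole of $\Z^d\times\Z^d$ and not only on the commuting locus. Then we compose with $a,b$.

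First we expand with the crossed pairing identities in $G\wedge G$:
$$hs_q\wedge ks_r = {}^{h}(s_q\wedge k s_r)\,(h\wedge ks_r),$$
and further expand $s_q\wedge ks_r$ and $h\wedge ks_r$ into terms $s_q\wedge k$, $s_q\wedge s_r$, $h\wedge k$, $h\wedge s_r$, each conjugated by elements of $H$ or by $s_q,s_r$. All these terms live in the nilpotent group $G\wedge G$, or more precisely in $\widehat G'$. We work inside the subgroup $\widehat H\langle \hat s_q,\hat s_r\rangle$ of $\widehat G=F/[F,R]$. The key structural input is that the preimage $\widehat H$ of $H$ in $\widehat G$ is a central extension of the torsion-free nilpotent group $H$, so it is nilpotent of class $\le c+1$. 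Conjugation by $\hat s_q,\hat s_r$ induces polynomial (Mal'cev-coordinate) automorphisms.

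Second, we use the classical theorem of Hall (polynomial multiplication in nilpotent groups). Choose a Mal'cev-type basis of a finitely generated nilpotent subgroup $N\le\widehat G$ of class $\le c+1$ containing $\widehat H$, together with the finitely many relevant elements $\hat s_q\wedge\cdot$. Multiplication, inversion and powering in $N$ are then polynomial in coordinates, and a word of weight $w$ has degree $\le w$ in the weight-filtered sense. The element $\hat h\hat s_q$ corresponds to $h\mapsto \hat h$, whose coordinates are linear in $\bm a$ once we pick lifts $\hat h_i$ and set $\hat h=\hat h_1^{a_1}\cdots\hat h_d^{a_d}$. The commutator $[\hat h\hat s_q,\hat k\hat s_r]$ can be rewritten as an element of $N$ built from $\hat h,\hat k$ and the fixed elements $\hat s_q,\hat s_r$ via commutators. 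Since $\widehat G'\cap\widehat H$ lies in the class-$(c+1)$ part, each Mal'cev coordinate of the result becomes a polynomial in $(\bm a,\bm b)$ of degree at most some $E_{q,r}$.

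To get $E_{q,r}\le c+1$ we use the weighted degree: with respect to a filtration adapted to the lower central series of $N$, the coordinates at weight $i$ are polynomials of degree $\le i$ in the linear inputs. The wedge lands in $M(G)$, a finitely generated abelian subgroup. Its image in $M(G)\le N$ is read off from the coordinates, and the composition of a degree-$\le i$ polynomial with coordinate-degree $\le\ell$ maps $a,b$ has degree $\le \ell i$. Hence, by Proposition \ref{prop:newton} (the characterization via integer-valued polynomials), $\theta$ is polynomial of degree $\le \ell E_{q,r}$. Effective computability of $E_{q,r}$ follows because $N$ and its nilpotent presentation can be computed with the Eick--Nickel algorithms, and the Hall polynomials are computable.

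The main obstacle is the second step: making precise that conjugation by the finite-order coset representatives $s_q,s_r$ does not destroy the degree bound. This is needed because $\langle\widehat H,\hat s_q,\hat s_r\rangle$ need not be nilpotent. First I would treat conjugation by $\hat s_q$ as an automorphism of $\widehat H$ that is polynomial of weighted degree $\le$ weight, since it preserves the lower central filtration. I would then reduce all occurrences of $\hat s$ to the fixed element $\hat s_q\hat s_r$-words, which contribute only constants. If this bookkeeping gets too delicate, the fallback is to accept any computable $E_{q,r}$ from Hall's bounds and sharpen it to $c+1$ afterwards using the lower central series of $\widehat H$.
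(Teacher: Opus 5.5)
Your overall route is the paper's: pass to $\widehat G=F/[F,R]$, use that $\widehat H$ is nilpotent of class $\le c+1$, collect, and compose with $a,b$. Your worry about the coset representatives is legitimate, since the paper simply applies collection to the commutator, and your remedy works. Concretely, with $[x,y]=xyx^{-1}y^{-1}$ and ${}^gx=gxg^{-1}$,
\[
[\hat h\hat s_q,\hat k\hat s_r]=\hat h\cdot{}^{\hat s_q}\hat k\cdot{}^{\hat s_q\hat s_r\hat s_q^{-1}}\bigl(\hat h^{-1}\bigr)\cdot[\hat s_q,\hat s_r]\cdot\hat k^{-1}.
\]
Here $[\hat s_q,\hat s_r]\in\widehat H$ because $qr=rq$. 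So the commutator is a product of powers of fixed elements of $\widehat H$ with exponents affine in $(\bm u,\bm v)$. One side claim should be dropped: $G\wedge G$ need not be nilpotent, for instance $A_5\wedge A_5\cong\mathrm{SL}(2,5)$.

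The genuine gap is the sentence ``its image in $M(G)$ is read off from the coordinates''. Polynomiality of $\theta$ is meant additively in the abelian group $M(G)$. You therefore need the value in $M(G)$ to be a $\mathbb Z$-linear combination of fixed elements whose coefficients are polynomial coordinates. In a basis adapted only to the lower central series of $\widehat H$ this is not automatic, because coordinates need not be additive on central elements. For example, take $\mathbb Z\times H_3(\mathbb Z)$ with $w$ generating the first factor and $z=[x,y]$. The lower-central-adapted basis $(x,y,wxy,z)$ gives the central element $w^nz^m$ the coordinates $(-n,-n,n,m+g(n))$ with $g$ quadratic. So extracting the element of $M(G)$ is a nonlinear operation on coordinates. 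Your count ``degree $\le i$, hence $\le\ell i$ after composition'' therefore does not bound the degree of $\theta$, and your fallback for $E_{q,r}\le c+1$ is left unsupported.

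The missing idea is the paper's choice of coordinates:
\begin{enumerate}
\item Take the polycyclic sequence $\widehat h_1,\dots,\widehat h_d,n_1,\dots,n_s$ of $\widehat H$, ending with generators of the central subgroup $R/[F,R]$.
\item Collection then gives $\widehat\mu_{\mathcal H}(\gamma_{q,r}(\bm u,\bm v))\,n_1^{P_1(\bm u,\bm v)}\cdots n_s^{P_s(\bm u,\bm v)}$ with $\deg P_j\le c+1$.
\item On the commuting locus, $\gamma_{q,r}(a(\bm z),b(\bm z))=0$ by uniqueness of Mal'cev coordinates in $H$.
\item Since $R/[F,R]$ is central, $\theta(\bm z)=\sum_j P_j(a(\bm z),b(\bm z))\,n_j$. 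This is visibly an $M(G)$-valued polynomial of degree $\le \ell\max_j\deg P_j$.
\end{enumerate}
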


\begin{proof}
    As $G$ is finitely presented, choose a finite free presentation $G=F/R$. Put $\widehat{G}=F/[F,R]$ and $N=R/[F,R]$. Let $\widehat{H}$ be inverse image of $H$ in $\widehat{G}$. As $N$ is central in $\widehat{G}$, it is finitely generated. Thus it follows that $\widehat{H}$ is finitely generated nilpotent of class $\le c+1$, where $c$ is the nilpotency class of $H$. For each $i$, choose a lift $\widehat h_i\in\widehat H$ of $h_i$, and
    define a set-theoretic lift of the Mal'cev coordinate map by
    $\widehat\mu_{\mathcal H}(\bm{z})=\widehat h_1^{z_1}\cdots\widehat h_d^{z_d}$.
    Also choose fixed lifts $t_q,t_r\in\widehat G$ of $s_q,s_r$. As $\mathcal{C}_{q,r}\neq\emptyset$, it follows, in particular, that $[q,r]=1$. This implies that  $$[\widehat\mu_{\mathcal H}(\bm{u})t_q,\widehat\mu_{\mathcal H}(\bm{v})t_r]
    \in\widehat H$$ for all $\bm{u},\bm{v}\in\Z^d$.
    Choose a consistent nilpotent generating sequence for $\widehat H$
    adapted to the central subgroup $N$, and choose generators
    $n_1,\ldots,n_s$ of $N$. We apply Hall collection ordered by commutator weight to the preceding commutator. In a nilpotent group of class $k$, the exponent of a collected commutator of weight $w$ is an integer-valued polynomial of ordinary total degree at most $w$ in the input exponents.  All commutators of weight greater than $k$ vanish. 
    This
    shows that there exist integer-valued rational polynomials $P_i:\Z^{2d}\rightarrow\Z$, and $\gamma_{q,r}(\bm{u},\bm{v})\in\Z^d$ with coordinates that are integer-valued rational polynomials, such that
    $$[\widehat\mu_{\mathcal H}(\bm{u})t_q,\widehat\mu_{\mathcal H}(\bm{v})t_r]=\widehat\mu_{\mathcal H}(\gamma_{q,r}(\bm{u},\bm{v}))
    n_1^{P_1(\bm{u},\bm{v})}\cdots n_s^{P_s(\bm{u},\bm{v})}.$$
    Let $$E_{q,r} =\max\{\deg P_j\mid 1\leq j\leq s\}\le c+1.$$ Let $\bm{z}\in\Z^e$.
    As $(\mu_{\mathcal H}(a(\bm{z}))s_q,\mu_{\mathcal H}(b(\bm{z}))s_r)\in\mathcal C_{q,r}$,
    we conclude that 
    $\mu_{\mathcal H}(\gamma_{q,r}(a(\bm{z}),b(\bm{z})))=1$.
    Uniqueness of Mal'cev coordinates in the torsion-free group $H$ implies
    $\gamma_{q,r}(a(\bm{z}),b(\bm{z}))=0$. Therefore,
    $$\theta(\bm{z})=[\widehat\mu_{\mathcal H}(a(\bm{z}))t_q,\widehat\mu_{\mathcal H}(b(\bm{z}))t_r]=n_1^{P_1(a(\bm{z}),b(\bm{z}))}\cdots n_s^{P_s(a(\bm{z}),b(\bm{z}))}.$$
    Each coordinate of $a(\bm{z})$ and $b(\bm{z})$ has total degree at most $\ell$.
    Therefore,
    $\deg P_j(a(\bm{z}),b(\bm{z}))
    \leq \ell\deg P_j
    \leq \ell E_{q,r}$.
    In additive notation,
    $$\theta(\bm{z})=\sum_{j=1}^sP_j(a(\bm{z}),b(\bm{z}))\,n_j.$$
    It is therefore an $M(G)$-valued polynomial map of degree at most
    $\ell E_{q,r}$.
\end{proof}


\section{Virtually abelian groups}
\label{s:virtabel}

\noindent
At first, we prove the following result:

\begin{proposition}
    \label{prop:fitting}
    Let $G$ be a finitely generated virtually abelian group. Let $F$ be its Fitting subgroup. Then the following hold:
    \begin{enumerate}
        \item $|F:Z(F)|$ is finite.
        \item Let $m$ be the exponent of the torsion group of $Z(F)$. Then $A=mZ(F)$ is a torsion-free normal abelian subgroup of $G$ with $|G:A|<\infty$.
    \end{enumerate}
\end{proposition}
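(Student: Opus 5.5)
Since $G$ is finitely generated and virtually abelian, I would first fix a normal abelian subgroup $B\trianglelefteq G$ of finite index. Such a $B$ exists: take any abelian subgroup of finite index and replace it by its normal core, which is still abelian and still of finite index.

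For (1), the plan is to compare $F$ with $B$.
\begin{enumerate}
\item $B$ is a normal nilpotent subgroup, so $B\le F$ because the Fitting subgroup contains every normal nilpotent subgroup. Hence $|G:F|<\infty$.
\item $G$ is finitely generated virtually abelian, hence polycyclic-by-finite, so every subgroup is finitely generated. In particular $F$ is finitely generated. In a polycyclic-by-finite group the Fitting subgroup is nilpotent, so $F$ is finitely generated nilpotent.
\item Since $|F:B|<\infty$, the centralizer $C_F(B)\supseteq B$ has finite index in $F$. Write $F=\langle f_1,\dots,f_k\rangle$. Each $C_F(f_i)$ contains $B$, so it has finite index, and $Z(F)=\bigcap_i C_F(f_i)$ is a finite intersection of finite-index subgroups.
\end{enumerate}
This gives $|F:Z(F)|<\infty$. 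Note that the last step uses only that $F$ is finitely generated and contains the abelian finite-index subgroup $B$; the nilpotence of $F$ is not actually needed.

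For (2), I would argue as follows.
\begin{enumerate}
\item $Z(F)$ is characteristic in $F$, and $F$ is normal in $G$, so $Z(F)\trianglelefteq G$.
\item $Z(F)$ is a finitely generated abelian group, say $Z(F)\cong \Z^r\oplus T$ with $T$ finite. So the exponent $m$ of $T$ is finite.
\item The subgroup $A=mZ(F)=\{z^m\mid z\in Z(F)\}$ is characteristic in $Z(F)$, hence normal in $G$.
\item $A\cong m\Z^r$ is torsion-free.
\item $Z(F)/A\cong (\Z/m)^r\oplus T$ is finite.
\end{enumerate}
Combining, $|G:A|=|G:F|\,|F:Z(F)|\,|Z(F):A|<\infty$.

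The main obstacle is the finiteness of $|F:Z(F)|$ in (1), specifically checking that $F$ is finitely generated so that $Z(F)$ is a finite intersection of centralizers. I would handle it through the maximal condition on subgroups of polycyclic-by-finite groups, as cited from \cite{Seg05}.
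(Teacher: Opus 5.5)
\textbf{There is a genuine gap in part (1).} Your argument fails at the step ``Each $C_F(f_i)$ contains $B$.'' Since $B$ is abelian, you only get $B\le C_F(b)$ for $b\in B$. For a generator $f_i\notin B$ there is no reason for $B$ to commute with $f_i$. Knowing that $C_F(B)$ has finite index does not help. What you need is that each $f_i$ has only finitely many conjugates in $F$.

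Your remark that nilpotence of $F$ is not needed is also wrong, and it shows the step cannot be repaired as stated. The infinite dihedral group $\langle t,s\mid s^2=1,\ sts^{-1}=t^{-1}\rangle$ is finitely generated and contains the abelian subgroup $\langle t\rangle$ of index $2$. Yet its centre is trivial, and $C(s)=\{1,s\}$ has infinite index. The missing idea is that every element of $F$ centralizes a fixed subgroup of finite index. This is exactly where nilpotence must be used.

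\textbf{How the paper fills this.}
\begin{enumerate}
\item Replace $B$ by $B_1=eB$, where $e$ is the exponent of the torsion subgroup of $B$. Then $B_1\cong\Z^r$ is characteristic in $B$, hence normal in $G$, and of finite index in $F$.
\item For $x\in F$, some power $x^n$ lies in the abelian group $B$. So conjugation by $x$ induces an automorphism $\alpha_x$ of $B_1$ of finite order.
\item Consider $V_i=(B_1\cap Z_i(F))\otimes\Q$, which exhausts $B_1\otimes\Q$ at $i=c$, the class of $F$. For $b\in B_1\cap Z_i(F)$ we have $[b,x]\in B_1\cap Z_{i-1}(F)$. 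Hence $\alpha_x$ acts trivially on every $V_i/V_{i-1}$, so it is unipotent.
\item A unipotent automorphism of finite order in characteristic zero is the identity. So $\alpha_x=1$ on $B_1\otimes\Q$, and therefore on the torsion-free group $B_1$.
\item Hence $B_1\le Z(F)$, which gives (1) directly.
\end{enumerate}
Once you have this fact, your intersection of centralizers becomes superfluous. Your treatment of part (2) is fine, given (1).
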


\begin{proof}

    Let $H$ be an abelian normal subgroup of $G$ of finite index. Then $H\le F$, hence $|G:F|<\infty$. Thus it suffices to show that $|F:Z(F)|<\infty$. Let $B=eH$, where $e$ is the exponent of the torsion subgroup of $H$. Then $B$ is characteristic in $H$ and hence normal in $G$. Note that $|F:B|<\infty$. Conjugation by $x\in F$ induces an automorphism  $\alpha_x$ of $B$. As $x^m\in B$ for some $m\ge 1$, it follows that $\alpha_x^m=1$. As $G$ is finitely generated virtually abelian, it satisfies the max condition, hence $F$ is nilpotent. Let $c$ be its nilpotency class, and set
    $V_i=\bigl(B\cap Z_i(F)\bigr)\otimes_{\mathbb Z}\mathbb Q\leq B_{\mathbb Q}:=B\otimes_{\mathbb Z}\mathbb Q$.
    Then
    $$0=V_0\leq V_1\leq\cdots\leq V_c=B_{\mathbb Q}.$$
    For $b\in B\cap Z_i(F)$ and $x\in F$, we have that $[b,x]\in B\cap Z_{i-1}(F)$.
    Consequently, conjugation by $x$ acts trivially on each quotient
    $V_i/V_{i-1}$.  Thus the automorphism $\alpha_x$ induced by conjugation
    on $B_{\mathbb Q}$ is unipotent. A finite-order unipotent linear transformation over a field of characteristic zero is the identity.  Therefore $\alpha_x=1$, so $x$
    centralizes $B$.  Since $x\in F$ was arbitrary, $B\leq Z(F)$.
    This proves (1), and (2) is a straightforward consequence.
\end{proof}

Let $G$ be a finitely generated virtually abelian group. Let $A\cong \Z^n$ be a normal subgroup (written additively) of $G$  with $|G:A|<\infty$. Write $Q=G/A$. Choose a section $s:Q\rightarrow G$, $q\mapsto s_q$, with $s_1=1$. Then $A$ becomes a $Q$-module via $q\cdot a=s_qas_q^{-1}$, and this yields a map $\rho:Q\rightarrow\GL_n(\Z)$. The map $\omega:Q\times Q\rightarrow A$ given by $\omega(q,r)=s_qs_rs_{qr}^{-1}$ is the factor set of the chosen section. The group $G$ can be identified, as a set, with $A\times Q$, together with the multiplication
$$(a,q)(b,r)=(a+\rho(q)b+\omega(q,r),qr).$$
This gives a description of commuting pairs in $G$. The elements $(a,q)$ and $(b,r)$ commute if and only if
\begin{align}
    \label{eq:comm1} qr &= rq,\\ 
    \label{eq:comm2} (I-\rho(r))a+(\rho(q)-I)b &= \omega(r,q)-\omega(q,r).
\end{align}
Fix a basis of $A$ and use row vectors. Let $(a,q)$ and $(b,r)$ commute. We define a $2n\times n$ integer matrix
$$T_{q,r}=\begin{pmatrix}
    I - \rho(r)\\
    \rho(q) - I
\end{pmatrix}.$$
Denote also $\Delta_{q,r}=\omega(r,q)-\omega(q,r)$.
Then the equation \eqref{eq:comm2} can be rewritten as $(a,b)T_{q,r}=\Delta_{q,r}$.

Let $q,r\in Q$, and let $\mathcal{C}_{q,r}$ be the set commuting pairs in $As_q\times As_r$. If $q$ and $r$ do not commute, then $\mathcal{C}_{q,r}$ is empty. Suppose that $qr=rq$. Then one can use the Smith normal form to decide whether the system $xT_{q,r}=\Delta_{q,r}$ has integral solutions. If not, then $\mathcal{C}_{q,r}=\emptyset$. If there is some integral solution $\bm{x}_0\in\Z^{2n}$, then the set of all solutions is $\bm{x}_0+\Lambda_{q,r}$, where $\Lambda_{q,r}$ is the (left) integral kernel of $T_{q,r}$. Again, Smith normal form can be used to determine both $\bm{x}_0$ and an integral basis $k_1,\ldots ,k_d$ of $\Lambda_{q,r}$; note that $d$ depends on $q$ and $r$, so we sometimes write $d=d_{q,r}$. Using the notation $\bm{z}=(z_1,\ldots ,z_d)\in\Z^d$, we see that
$$\mathcal{C}_{q,r}=\{ (a(\bm{z})s_q,b(\bm{z})s_r)\mid \bm{z}\in\Z^d, (a(\bm{z}),b(\bm{z}))=\bm{x}_0+z_1k_1+\cdots +z_dk_d\}.$$
If $\mathcal{C}_{q,r}\neq\emptyset$, we
have a map $$\phi_{q,r}:\mathbb{Z}^{d_{q,r}}\longrightarrow \mathcal{C}_{q,r}$$ given by $\phi_{q,r}(\bm{z})=(a(\bm{z})s_q,b(\bm{z})s_r)$. Note that the coordinate functions of $a$ and $b$ are affine and therefore of degree $\le 1$. Proposition \ref{prop:wedgefromcoord} implies that $G$ admits a polynomial cover of commuting loci of degree $\le 2$. Note that if $P:\Z^d\rightarrow B$ is a polynomial map of degree $\le 2$, then Proposition \ref{prop:fingen} implies that $\langle P(\bm{z})\mid \bm{z}\in\Z^d\rangle =\langle P(0),P(e_i),P(2e_i),P(e_i+e_j)\mid 1\le i,j\le d, i<j\rangle$.

The algorithm for computing $B_0(G)$ for a polycyclic group $G$ known to be virtually abelian therefore goes as follows:

\begin{enumerate}
    \item Construct $A\cong\Z^n$ as in Proposition \ref{prop:fitting}. Set $Q=G/A$ and choose a transversal $s:Q\rightarrow G$.
    \item Compute the action matrices $\rho(q)$ and factor-set vectors $\omega(q,r)$.
    \item Compute $G\wedge G$, the epimorphism $\kappa:G\wedge G\twoheadrightarrow G'$, its kernel $M$ and the crossed pairing $\lambda:G\times G\rightarrow G\wedge G$.
    \item For every commuting $(q,r)\in Q\times Q$, solve the system $$(I-\rho(r))\bm{a}+(\rho(q)-I)\bm{b}=\omega(q,r)-\omega(r,q)$$ for $\bm{a},\bm{b}\in\Z^n$. The solutions are parametrized by
    $$(a(\bm{z}),b(\bm{z}))=\bm{x}_0+z_1k_1+\cdots +z_dk_d,$$
    see above. This gives a description of the set $$\mathcal{C}_{q,r}=\{(a(\bm{z})s_q,b(\bm{z})s_r)\mid \bm{z}\in\Z^d\}.$$
    \item Evaluate $\lambda$ on the above pairs for $\bm{z}\in\{ 0,e_i,2e_i, e_i+e_j\mid 1\le i,j\le d, i<j\}$
    and append the results to a set $S$.
    \item Return $M/\langle S\rangle$.
\end{enumerate}



\section{Bogomolov multipliers and extensions of groups}
\label{s:B0metabelian}

\noindent

\subsection{Abelian-by-cyclic groups}
\label{ss:abbycyc}

Bogomolov \cite[Lemma 4.9]{Bog88} showed that if $G$ is a finite abelian-by-cyclic group, then its (cohomological) Bogomolov multiplier is trivial. Another proof of this fact was found by Jezernik \cite{Jez16} in his PhD thesis. Both proofs essentially use the fact that the group in question is finite. We show here that we can drop the finiteness assumption:

\begin{theorem}
    \label{thm:abbycyc}
    If $G$ is an abelian-by-cyclic group, then $B_0(G)=0$.
\end{theorem}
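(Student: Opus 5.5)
We plan to prove that $M(G)=M_0(G)$ directly, using the Lyndon--Hochschild--Serre spectral sequence of the extension $1\to A\to G\to C\to 1$, with $A$ abelian normal and $C=G/A$ cyclic. The key feature is that a cyclic group has homological dimension at most one when infinite; when $C$ is finite cyclic its homology is $2$-periodic but still very explicit. We will treat the two cases separately, but both rest on the same filtration of $H_2(G,\Z)$:
$$0=F_{-1}\le F_0\le F_1\le F_2=H_2(G,\Z).$$
Its graded pieces are subquotients of $E^2_{0,2}=H_0(C,\wedge^2A)$, $E^2_{1,1}=H_1(C,A)$ and $E^2_{2,0}=H_2(C,\Z)$. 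Since $C$ is cyclic, $H_2(C,\Z)=0$, so $H_2(G)=F_1$, and we only need to show that $F_0$ and $F_1/F_0$ are covered by $M_0(G)$.

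\textbf{The bottom piece $F_0$.} This is the image of $H_2(A,\Z)=\wedge^2A\to H_2(G,\Z)$. Since $A$ is abelian, this image lies in $M_0(G)$ by the description of $M_0$ as the sum of images of $H_2$ of abelian subgroups.

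\textbf{The piece $F_1/F_0$.} This is a quotient of $E^\infty_{1,1}$, which is a subgroup of $E^2_{1,1}=H_1(C,A)$. Write $C=\langle t\rangle$, choose a lift $g\in G$ of $t$, and let $t$ act on $A$ by conjugation.
\begin{enumerate}
\item If $C\cong\Z$, then $H_1(C,A)=A^{t}$, the fixed points of the action.
\item If $C\cong\Z/m$, then $H_1(C,A)=\ker(N)/(t-1)A$, where $N=1+t+\cdots+t^{m-1}$.
\end{enumerate}
We want to show that every class in $F_1$ is represented, modulo $F_0$, by a torus class $\tau_G(a,h)$ with $[a,h]=1$. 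For a fixed point $a\in A^t$, the element $a$ commutes with $g$, so $\tau_G(a,g)\in M_0(G)$. We will check, either via the bar complex description $[a\mid g]-[g\mid a]$ or via functoriality applied to the subgroup $\langle a,g\rangle$, that under the edge/filtration map $F_1\to E^\infty_{1,1}\subseteq H_1(C,A)$ the class $\tau_G(a,g)$ maps to $\pm$ the class of $a\otimes t$. In the infinite cyclic case this already shows $F_1\subseteq M_0(G)+F_0=M_0(G)$.

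\textbf{The finite cyclic case: the main obstacle.} Here a class in $\ker N/(t-1)A$ need not come from a fixed point. Given $a\in A$ with $Na=0$, we instead use the element $ag$, which need not commute with $g$. We intend to use the subgroups $\langle a, g\rangle$ more cleverly. Alternatively, we can use naturality: restrict to the subgroup $A\langle g\rangle$, or compare with the abelian-by-cyclic subgroup generated by $A$ and $g$. We expect to find commuting pairs such as $(g, a\,{}^{g}a\cdots)$ or $(ag,\,g^m\text{-related elements})$ whose torus classes hit the required classes in $E^\infty_{1,1}$.

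A cleaner route we would try first is to note that $E^\infty_{1,1}$ is the kernel of the differential $d^2:H_1(C,A)\to H_{-1}$, which is trivial, modulo the image of $d^2:E^2_{3,0}=H_3(C,\Z)\to E^2_{1,1}$. We would then identify $F_1/F_0$ with the image of $H_2(A\langle g\rangle)$ and reduce to the cyclic extension generated by $g^m\in A$. Concretely, we would compare the LHS sequence with that of the abelian subgroup $\langle A^{t},g\rangle$ and with that of $A\rtimes\langle g\rangle$, where $g$ now has infinite order. Such a $G$ is a quotient of the corresponding group $\tilde G$ with infinite cyclic top, for which the fixed-point argument above applies. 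The remaining step is to show that the surjection $\tilde G\to G$ induces a map $H_2(\tilde G)\to H_2(G)$ whose cokernel is generated by torus classes. This is the step we expect to be hardest, and we would handle it via the five-term exact sequence of $1\to\langle g^m a_0^{-1}\rangle\to\tilde G\to G\to1$, where $a_0=g^m\in A$. The cokernel of $H_2(\tilde G)\to H_2(G)$ is a quotient of $\ker\bigl(K/[\tilde G,K]\to \tilde G^{ab}\bigr)$, where $K=\langle g^m a_0^{-1}\rangle$ is the kernel of $\tilde G\to G$. This quotient is generated by the image of the central element $g^ma_0^{-1}$ (and its powers) lying in $\tilde G'$, which should be represented by the class of the commuting pair $(g,a_0)$ in $G$.
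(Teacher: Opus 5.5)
You handle $F_0$ and the infinite cyclic case exactly as the paper does, but your finite cyclic case rests on a miscomputation. For $C=\langle t\rangle$ of order $m$, the periodic resolution gives $H_1(C,A)=A^C/NA$. The group $\ker N/(t-1)A$ that you wrote is $H_2(C,A)$. To check, take $A=\Z$ with trivial action: then $\ker N=0$, whereas $H_1(\Z/m,\Z)=\Z/m$.

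With the correct formula your ``main obstacle'' disappears. Every class of $H_1(C,A)$ is represented by a fixed point $a\in A^C$, and the lift $g$ commutes with $a$. The torus class $\tau_G(g,a)\in M_0(G)$ then maps to that class in $F_1/F_0$ by the same computation as in the infinite case. This uniform argument is precisely the paper's proof.

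There is also a smaller slip. $E^\infty_{1,1}$ is the \emph{quotient} of $E^2_{1,1}$ by the image of $d^2\colon H_3(C,\Z)\to H_1(C,A)$, not a subgroup, and this differential can be nonzero for finite $C$. The slip is harmless, since you only need torus classes to surject onto $E^\infty_{1,1}$.

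As it stands, your finite case is not proved. The first idea (pairs built from $ag$ with $Na=0$) is only a hope. In the second, you leave the decisive step open and describe it incorrectly.

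That reduction can in fact be closed, and more easily than you anticipate. Take $\widetilde G=A\rtimes\langle\tilde g\rangle$ with the central kernel $K=\langle \tilde g^m a_0^{-1}\rangle$. The generator of $K$ maps to an element of $\widetilde G^{\rm ab}\cong A_C\oplus\Z$ whose $\Z$-coordinate is $m\neq 0$, so $K\cap\widetilde G'=1$. The five-term sequence $H_2(\widetilde G)\to H_2(G)\to K\to\widetilde G^{\rm ab}$ then shows that $H_2(\widetilde G)\to H_2(G)$ is surjective. So $B_0(G)=0$ follows from the infinite cyclic case and functoriality of torus classes.

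Consequently there is no nontrivial cokernel for $\tau_G(g,a_0)$ to account for. In fact $\tau_G(g,g^m)=0$, because that commuting pair factors through $\Z$ and $H_2(\Z,\Z)=0$. This route is a legitimate alternative to the paper's, but it is strictly more work than simply using $H_1(C,A)=A^C/NA$.
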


\begin{proof}
    Let $A$ be an abelian normal subgroup of $G$ such that
    $C=G/A$ is cyclic.  Consider the homological
    Lyndon--Hochschild--Serre spectral sequence
    \cite[Chapter~VII, Section~6]{Bro82}
    $$
    E^2_{p,q}=H_p(C,H_q(A,\mathbb Z))
    \Longrightarrow H_{p+q}(G,\mathbb Z).
    $$
    In total degree two this gives a filtration
    $0=F_{-1}\subseteq F_0\subseteq F_1\subseteq F_2
   =H_2(G,\mathbb Z)$
    such that
    $F_p/F_{p-1}\cong E^\infty_{p,2-p}$.
    Since $C$ is cyclic, the group $H_2(C,\mathbb Z)$ is trivial.    
    Thus $E^2_{2,0}=0$, and consequently
    $E^\infty_{2,0}=0$ and $H_2(G,\Z)=F_2=F_1$.
    The subgroup $F_0$ is the image of the edge homomorphism
    $H_0(C,H_2(A,\mathbb Z))
    =H_2(A,\mathbb Z)_C
    \rightarrow H_2(G,\mathbb Z)$.
    Since $A$ is abelian,
    $H_2(A,\mathbb Z)\cong\bigwedge\nolimits^2_{\mathbb Z}A$,
    and this group is generated by elements $a\wedge b$ with
    $a,b\in A$.  Their images in $H_2(G,\mathbb Z)$ are the torus
    classes $\tau_G(a,b)$.  Hence
    $F_0\leq M_0(G)$.

    Consider $F_1/F_0\cong E^\infty_{1,1}$.
    There are no outgoing differentials from bidegree $(1,1)$.
    The only possible incoming differential is
    $d^2\colon E^2_{3,0}\longrightarrow E^2_{1,1}$.
    Therefore
    $E^\infty_{1,1}=E^2_{1,1}/\im d^2=H_1(C,A)/\im d^2$.

We now recall the description of $H_1(C,A)$.  If
$C=\langle c\rangle\cong\mathbb Z$, then
$H_1(C,A)=A^C$.
If $C=\langle c\mid c^r=1\rangle$ is finite cyclic, then
 $H_1(C,A)=A^C/N_C(A)$,
 where  $N_C=1+c+\cdots+c^{r-1}$ is the norm map for the action of $C$ on $A$.
Thus, in either case, every element of $H_1(C,A)$ is represented
by an element $a\in A^C$.

Choose a lift $t\in G$ of the generator $c\in C$.  The
$C$-action on $A$ is induced by conjugation, so
$c\cdot a=tat^{-1}$.
Since $a\in A^C$, we have $tat^{-1}=a$, and therefore $t$ and $a$ commute.
Consequently, the pair $(t,a)$ defines a torus class
$\tau_G(t,a)\in M_0(G)$.
Under the LHS filtration, the class $\tau_G(t,a)$ belongs to
$F_1$, and its image under
$F_1\longrightarrow F_1/F_0
 \cong E^\infty_{1,1}$
is the image of the class represented by $a\in H_1(C,A)$.
Indeed, in the normalized bar resolution the torus is represented
by $[t\mid a]-[a\mid t]$,
whose filtration-one component is the standard cycle representing
$a$ in $H_1(C,A)$.

It follows that
$M_0(G)\cap F_1\longrightarrow F_1/F_0$
is surjective.  Since $F_0\leq M_0(G)$, this implies
$F_1\leq M_0(G)$.
Namely, if $x\in F_1$, choose $m\in M_0(G)\cap F_1$ having the
same image as $x$ in $F_1/F_0$.  Then
$x-m\in F_0\leq M_0(G)$,
so $x\in M_0(G)$. This proves the result.
\end{proof}

\subsection{Semidirect products}
\label{ss:semidirect}
A formula for Bogomolov multipliers of semidirect products $G=N\rtimes Q$ of groups $N$ and $Q$ is obtained in \cite[Section 6]{Mor12}. Note that the standing assumption there is that the groups are finite, but the homological description, in particular, \cite[Theorem 6.1]{Mor12}, still works for arbitrary groups. In the case when $G$ is finite and $(|N|,|Q|)=1$, a cleaner formula was obtained by Kang \cite{Kan14}.

Here we obtain an alternative description for $B_0(G)$ that works for arbitrary groups. Let $\pi:G\rightarrow Q$ be the projection and fix a homomorphic section $s:Q\rightarrow G$. Put
$$K(G,N)=\ker (H_2(\pi):H_2(G,\Z)\to H_2(Q,\Z)).$$
Let $x,y\in G$ commute, and set
$$\delta_s(x,y)=\tau_G(x,y)-H_2(s)\tau_Q(\pi(x),\pi(y)).$$
Note that $\delta_s(x,y)\in K(G,N)$. Define
$$D_s(G,N)=\langle \delta_s(x,y)\mid x,y\in G, [x,y]=1\rangle.$$

\begin{theorem}
\label{thm:B0split}
    If 
    $$
    \begin{tikzcd}
    1 \arrow[r] &
    N \arrow[r] &
    G \arrow[r, shift left=.4ex, "\pi"] &
    Q  \arrow[l, shift left=.4ex, "s"]\arrow[r] &
    1
\end{tikzcd}
$$
is a split exact sequence of groups, there is a split exact sequence
    $$
    \begin{tikzcd}
    0 \arrow[r] &
    \frac{K(G,N)}{D_s(G,N)} \arrow[r] &
    B_0(G) \arrow[r, , "B_0(\pi)"] &
    B_0(Q) \arrow[r] &
    0.
\end{tikzcd}
$$
The splitting is induced by $s$. 
\end{theorem}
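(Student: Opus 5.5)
Since $\pi\circ s=\mathrm{id}_Q$, functoriality gives $H_2(\pi)H_2(s)=\mathrm{id}$ on $H_2(Q,\Z)$. This yields the split decomposition $H_2(G,\Z)=K(G,N)\oplus H_2(s)H_2(Q,\Z)$, with projection onto the first summand $p=\mathrm{id}-H_2(s)H_2(\pi)$. The plan is to check how $M_0(G)$ sits with respect to this decomposition, and then pass to quotients.

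First, $H_2(\pi)$ and $H_2(s)$ respect the commuting-pair subgroups. By functoriality of torus classes, $H_2(\pi)\tau_G(x,y)=\tau_Q(\pi x,\pi y)$ and $H_2(s)\tau_Q(u,v)=\tau_G(s u,s v)$; commuting pairs map to commuting pairs. Hence $H_2(\pi)(M_0(G))\le M_0(Q)$ and $H_2(s)(M_0(Q))\le M_0(G)$, so both maps descend to homomorphisms $B_0(\pi)\colon B_0(G)\to B_0(Q)$ and $B_0(s)\colon B_0(Q)\to B_0(G)$. Since $B_0(\pi)B_0(s)=\mathrm{id}$, the map $B_0(\pi)$ is surjective and $B_0(s)$ splits it. It then remains to identify $\ker B_0(\pi)$ with $K(G,N)/D_s(G,N)$.

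Second, $M_0(G)=D_s(G,N)\oplus H_2(s)M_0(Q)$ inside the decomposition above. For a commuting pair $x,y$ we have $\tau_G(x,y)=\delta_s(x,y)+H_2(s)\tau_Q(\pi x,\pi y)$, which gives the inclusion $\subseteq$. Conversely, $\delta_s(x,y)$ is a difference of two elements of $M_0(G)$ (using $H_2(s)M_0(Q)\le M_0(G)$), so $D_s(G,N)\le M_0(G)$, and $H_2(s)M_0(Q)\le M_0(G)$ was shown above. The sum is direct because $D_s\le K(G,N)$ and $H_2(s)M_0(Q)\le \im H_2(s)$. In other words, $p(M_0(G))=D_s(G,N)$ and $M_0(G)\cap K(G,N)=D_s(G,N)$.

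Third, I would compute the kernel. A class $[z]\in B_0(G)$ lies in $\ker B_0(\pi)$ iff $H_2(\pi)z\in M_0(Q)$. In that case $z-H_2(s)H_2(\pi)z\in K(G,N)$ differs from $z$ by an element of $H_2(s)M_0(Q)\le M_0(G)$. Therefore the natural map $K(G,N)\to\ker B_0(\pi)$ is surjective, and its kernel is $K(G,N)\cap M_0(G)=D_s(G,N)$ by the second step. This gives the exact sequence. The only delicate point is the second step, specifically the intersection $K\cap M_0(G)$. For $m=\sum\pm\tau_G(x_i,y_i)\in K$, write $m=\sum\pm\delta_s(x_i,y_i)+H_2(s)\sum\pm\tau_Q(\pi x_i,\pi y_i)$; applying $H_2(\pi)$ shows the second summand is $0$, so $m\in D_s$. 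No finiteness is used anywhere.
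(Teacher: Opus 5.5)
Your proof is correct and follows the paper's argument. Both split $H_2(G,\Z)=K(G,N)\oplus H_2(s)H_2(Q,\Z)$ via $H_2(\pi)H_2(s)=\mathrm{id}$, and both use $\tau_G(x,y)=\delta_s(x,y)+H_2(s)\tau_Q(\pi x,\pi y)$ to obtain the direct decomposition $M_0(G)=D_s(G,N)\oplus H_2(s)M_0(Q)$. You additionally make explicit several steps the paper leaves to the reader: that $D_s(G,N)\le M_0(G)$, that $K(G,N)\cap M_0(G)=D_s(G,N)$, and how $\ker B_0(\pi)$ is identified with $K(G,N)/D_s(G,N)$.
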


\begin{proof}
    As $\pi s=id_Q$, Functoriality gives $H_2(\pi)H_2(s)=id_{H_2(Q,\Z)}$. Thus $H_2(G,\Z)=K(G,N)\oplus H_2(s)H_2(Q,\Z)$. If $x,y\in G$ commute, the equation
    $\tau_G(x,y)=\delta_s(x,y)+H_2(s)\tau_Q(\pi(x),\pi(y))$ gives $M_0(G)=D_s(G,N)+H_2(s)M_0(Q)$. This sum is clearly direct, as $D_s(G,N)\le K(G,N)$. This finishes the proof.
\end{proof}

We assume from here on that $G=A\rtimes Q$, where $A$ is abelian. In this case, $A$ can be considered as a $G$-module. The following can be seen as a homological counterpart of \cite[Theorem 2]{Tah72}, proved there for finite groups:

\begin{lemma}
\label{lem:spect}
    There is a natural exact sequence
        $$
    \begin{tikzcd}
    H_2(A,\Z)_Q \arrow[r] &
    K(G,A) \arrow[r, , "\epsilon"] &
    H_1(Q,A) \arrow[r] &
    0.
\end{tikzcd}
$$
The image of the first map is contained in $M_0(G)$.
\end{lemma}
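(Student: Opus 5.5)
We will use the Lyndon--Hochschild--Serre spectral sequence for the split extension $1\to A\to G\to Q\to 1$, exactly as in the proof of Theorem~\ref{thm:abbycyc}:
$$E^2_{p,q}=H_p(Q,H_q(A,\Z))\Longrightarrow H_{p+q}(G,\Z).$$
It gives a filtration $0\subseteq F_0\subseteq F_1\subseteq F_2=H_2(G,\Z)$ with $F_p/F_{p-1}\cong E^\infty_{p,2-p}$. The plan is first to identify $K(G,A)$ with $F_1$. The edge map $H_2(G,\Z)\to E^\infty_{2,0}\subseteq E^2_{2,0}=H_2(Q,\Z)$ is $H_2(\pi)$, so its kernel $K(G,A)$ is $F_1$. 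Because the extension splits, $H_2(\pi)$ is surjective. Hence $E^\infty_{2,0}=E^2_{2,0}$, and all differentials leaving $E^r_{2,0}$ vanish; in particular $d^2\colon E^2_{2,0}\to E^2_{0,1}$ is zero. Similarly, in every degree $n$ the edge map $H_n(G)\to H_n(Q)$ is split surjective, so every differential leaving the bottom row $E^r_{p,0}$ is zero.

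Next we analyse the two graded pieces of $F_1$. For $F_0$: it is the image of $H_0(Q,H_2(A,\Z))=H_2(A,\Z)_Q\to H_2(G,\Z)$, i.e. $E^\infty_{0,2}$ is a quotient of $E^2_{0,2}$. This gives the first map of the sequence, induced by $H_2(\mathrm{inc})\colon H_2(A,\Z)\to H_2(G,\Z)$, which factors through coinvariants. For $F_1/F_0\cong E^\infty_{1,1}$: the only incoming differentials are $d^2\colon E^2_{3,0}\to E^2_{1,1}$ and $d^3$ from $E^3_{4,-1}=0$. By the previous paragraph $d^2$ from the bottom row vanishes, and the outgoing $d^2\colon E^2_{1,1}\to E^2_{-1,2}=0$ is trivial. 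Hence $E^\infty_{1,1}=E^2_{1,1}=H_1(Q,H_1(A,\Z))=H_1(Q,A)$.

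Define $\epsilon$ as the composite $K(G,A)=F_1\to F_1/F_0\cong H_1(Q,A)$. It is surjective, and its kernel is $F_0$, the image of $H_2(A,\Z)_Q$. This gives exactness of
$$H_2(A,\Z)_Q\to K(G,A)\xrightarrow{\epsilon}H_1(Q,A)\to 0.$$
Naturality follows from the naturality of the LHS spectral sequence with respect to morphisms of split extensions.

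For the last assertion, $H_2(A,\Z)\cong\wedge^2A$ is generated by the classes $a\wedge b=\tau_A(a,b)$. By functoriality of $\tau$, these map to $\tau_G(a,b)\in M_0(G)$, so the image of the first map lies in $M_0(G)$.

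The main obstacle is the vanishing of $d^2\colon E^2_{3,0}\to E^2_{1,1}$, which is needed to identify $E^\infty_{1,1}$ with all of $H_1(Q,A)$. Our argument is that a split extension makes $H_*(\pi)$ split surjective, forcing every bottom-row class to be a permanent cycle. We will justify this via naturality: compare with the spectral sequence of $1\to 1\to Q\to Q\to 1$ through $s$, which shows that the bottom row consists of permanent cycles.
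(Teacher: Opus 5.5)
Your proposal is correct and follows essentially the same route as the paper. You use the LHS spectral sequence, use the splitting to make the bottom row survive (so $F_1=K(G,A)$ and $E^\infty_{1,1}=E^2_{1,1}=H_1(Q,A)$), identify $F_0$ as the image of $H_2(A,\Z)_Q$, and obtain the $M_0(G)$ containment from torus classes; your comparison with the trivial extension $1\to 1\to Q\to Q\to 1$ via $s$ is a valid way to justify the permanence of the bottom row, where the paper appeals to the homological analogue of Evens' Proposition 7.3.2.
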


\begin{proof}
    Consider the homological Lyndon--Hochschild--Serre spectral sequence
    $$E^2_{p,q}=H_p(Q,H_q(A,\mathbb Z))
    \Longrightarrow H_{p+q}(G,\mathbb Z).$$
    Its total-degree-two filtration is
    $0\subseteq F_0\subseteq F_1\subseteq F_2=H_2(G,\mathbb Z)$,
    where $F_p/F_{p-1}\cong E^\infty_{p,2-p}$.

    Let $\pi: G\rightarrow Q$ be the projection and
    $s: Q\rightarrow G$ a section.  Since $\pi_*s_*=id$, the horizontal
    edge homomorphism $\pi_*\colon H_p(G,\mathbb Z)\rightarrow H_p(Q,\mathbb Z)$
    is surjective.  It follows that the bottom row survives unchanged, that is,
    $$E^\infty_{p,0}=E^2_{p,0}=H_p(Q,\mathbb Z).$$
    This is the homological counterpart of
     \cite[Proposition 7.3.2]{Eve91}. Consequently,
    $$F_1=\ker\bigl(H_2(G,\mathbb Z)\to H_2(Q,\mathbb Z)\bigr)=K(G,A).$$
    Because $A$ is abelian,
    $E^2_{1,1}=H_1(Q,H_1(A,\mathbb Z))=H_1(Q,A)$.   
    The only possible nonzero differential involving this term is
    $d^2_{3,0}\colon E^2_{3,0}\longrightarrow E^2_{1,1}$,
    but it vanishes because the bottom row survives unchanged.  Hence
    $$E^\infty_{1,1}=H_1(Q,A).$$
    The filtration therefore gives an exact sequence
    $$F_0\longrightarrow K(G,A)\longrightarrow H_1(Q,A)\longrightarrow0.$$
    The vertical edge homomorphism
    $H_2(A,\mathbb Z)_Q\rightarrow H_2(G,\mathbb Z)$    
    has image $F_0$.  This gives the exact sequence
    $$H_2(A,\mathbb Z)_Q\longrightarrow K(G,A)\longrightarrow H_1(Q,A)\longrightarrow0.$$
    Finally,
    $H_2(A,\mathbb Z)\cong\bigwedge\nolimits^2 A$,
    and $a\wedge b$ maps to  $\tau_G(a,b)$.
    Therefore the image of $H_2(A,\mathbb Z)_Q$ is contained in $M_0(G)$.
 \end{proof}

 \begin{proposition}
    \label{prop:B0AQ}
    Let
    $$
    \begin{tikzcd}
    1 \arrow[r] &
    A \arrow[r] &
    G \arrow[r, shift left=.4ex, "\pi"] &
    Q  \arrow[l, shift left=.4ex, "s"]\arrow[r] &
    1
\end{tikzcd}
$$
be a split exact sequence of groups with $A$ abelian. For every commuting pair $x,y\in G$ set
$$t_s(x,y)=\epsilon(\delta_s(x,y)),$$
where $\epsilon: K(G,A)\twoheadrightarrow H_1(Q,A)$ is as in Lemma \ref{lem:spect}. Define
$$T(A,Q)=\langle t_s(x,y)\mid x,y\in G,[x,y]=1\rangle.$$
Then $B_0(G)\cong B_0(Q)\oplus H_1(Q,A)/T(A,Q)$.
 \end{proposition}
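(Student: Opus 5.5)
By Theorem \ref{thm:B0split} applied to the split extension $A\rightarrowtail G\twoheadrightarrow Q$, we already have a split exact sequence
$$0\rightarrow K(G,A)/D_s(G,A)\rightarrow B_0(G)\rightarrow B_0(Q)\rightarrow 0,$$
so $B_0(G)\cong B_0(Q)\oplus K(G,A)/D_s(G,A)$. The plan is therefore to identify $K(G,A)/D_s(G,A)$ with $H_1(Q,A)/T(A,Q)$ via the surjection $\epsilon$ of Lemma \ref{lem:spect}.

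First, note that $\epsilon(D_s(G,A))=T(A,Q)$ by the very definition of $T(A,Q)$, since $D_s(G,A)$ is generated by the $\delta_s(x,y)$ and $t_s(x,y)=\epsilon(\delta_s(x,y))$. Hence $\epsilon$ induces a surjection
$$\bar\epsilon: K(G,A)/D_s(G,A)\twoheadrightarrow H_1(Q,A)/T(A,Q).$$
The kernel of this induced map is $(\ker\epsilon + D_s(G,A))/D_s(G,A)$. So injectivity amounts to showing $\ker\epsilon\le D_s(G,A)$. By exactness in Lemma \ref{lem:spect}, $\ker\epsilon$ is the image $F_0$ of $H_2(A,\Z)_Q$, which is generated by the torus classes $\tau_G(a,b)$ with $a,b\in A$.

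The key step is then to check that each such $\tau_G(a,b)$ lies in $D_s(G,A)$. Since $a,b\in A$ commute and $\pi(a)=\pi(b)=1$, we get $\tau_Q(1,1)=0$ (the class $[1\mid 1]-[1\mid 1]$ vanishes in the normalized bar complex, or equivalently the torus map factors through the trivial group). Thus $\delta_s(a,b)=\tau_G(a,b)$, giving $F_0\le D_s(G,A)$, and $\bar\epsilon$ is an isomorphism. Combining this with the splitting of Theorem \ref{thm:B0split} yields $B_0(G)\cong B_0(Q)\oplus H_1(Q,A)/T(A,Q)$.

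I expect no serious obstacle. The only point requiring care is confirming that the image of the edge map $H_2(A,\Z)_Q\to H_2(G,\Z)$ is exactly $\ker\epsilon$ and consists of the classes $\tau_G(a,b)$. This is what Lemma \ref{lem:spect} provides, using functoriality of $\tau$ under the inclusion $A\to G$.
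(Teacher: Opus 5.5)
Your proof is correct and follows the paper's argument: both reduce via Theorem \ref{thm:B0split} to $K(G,A)/D_s(G,A)$, then use $\ker\epsilon=F_0\le D_s(G,A)$ to pass to $H_1(Q,A)/T(A,Q)$. The only difference is cosmetic: you verify $F_0\le D_s(G,A)$ directly from $\delta_s(a,b)=\tau_G(a,b)$ for $a,b\in A$, whereas the paper deduces it from $F_0\le K(G,A)\cap M_0(G)=D_s(G,A)$.
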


 \begin{proof}
    By Theorem \ref{thm:B0split}, $B_0(G)\cong B_0(Q)\oplus K(G,A)/D_s(G,A)$. Let $F_0$ be the image of $H_2(A,\Z)_Q\to K(G,A)$. Then Lemma \ref{lem:spect} shows that $F_0\le K(G,A)\cap M_0(G)=D_s(G,A)$. Hence $K(G,A)/D_s(G,A)\cong (K(G,A)/F_0)/(D_s(G,A)/F_0)\cong H_1(Q,A)/T(A,Q)$, as required.
 \end{proof}

  Barge \cite[Theorem 3]{Bar89} proved that if $A$ and $Q$ are
finite, $A$ is abelian, and $Q$ is bicyclic, then
$B_0(A\rtimes Q)=0$.  We now show that the corresponding statement
for the homological Bogomolov multiplier holds without any finiteness
assumption.

\begin{lemma}
\label{lem:torus-edge}
Let $G=A\rtimes Q$, where $A$ is abelian, and let
$s(q)=(0,q)$ be the canonical section. Suppose that
$x,y\in Q$ commute and that $u,v\in A$ satisfy
$$
   (1-y)u+(x-1)v=0.
$$
Then $g=(u,x)$ and $h=(v,y)$ commute in $G$. Moreover, under
the epimorphism
$\epsilon\colon K(G,A)\longrightarrow H_1(Q,A)$
of Lemma \ref{lem:spect}, one has
$$
   \epsilon\bigl(\delta_s(g,h)\bigr)
     =[\,v e_x-u e_y\,].
$$
\end{lemma}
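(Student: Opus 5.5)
First I will check commutation directly from the multiplication rule in $G=A\rtimes Q$, written $(a,q)(b,r)=(a+qb,qr)$. We have $gh=(u+xv,xy)$ and $hg=(v+yu,yx)$. Since $xy=yx$, the two products agree exactly when $u+xv=v+yu$, which is $(1-y)u+(x-1)v=0$. This is \eqref{eq:comm2} with trivial factor set, so the first claim is immediate.

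For the formula for $\epsilon$, I will work in the normalized bar complex of $G$, where $\tau_G(g,h)=[g\mid h]-[h\mid g]$. Subtracting $H_2(s)\tau_Q(x,y)=[s_x\mid s_y]-[s_y\mid s_x]$ gives the representative of $\delta_s(g,h)$. The map $\epsilon$ is the projection $F_1\to F_1/F_0\cong E^\infty_{1,1}=H_1(Q,A)$. I will compute it via the standard comparison between the bar resolution of $G$ and the LHS double complex. At chain level, a $2$-chain $\sum n_i[g_i\mid h_i]$ of filtration $\le 1$ is sent to the $1$-chain of $Q$ with coefficients in $H_1(A)=A$ obtained from the "mixed" terms. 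Concretely, write each group element as $g_i=a_i s(q_i)$ and take the component in which exactly one entry contributes an $A$-part and the other contributes a $Q$-part. This gives, up to sign conventions, $[g\mid h]\mapsto (\text{$A$-part of }h)$ twisted by the $Q$-part of $g$, paired with $e_{\pi(g)}$, minus the symmetric term.

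The cleanest way to carry this out is to decompose $g=u\cdot s_x$ and $h=v\cdot s_y$ in $G$ and apply the cocycle identities $[ab\mid c]=[a\mid bc]+[b\mid c]-[a\mid b]$ modulo boundaries. This expands $[g\mid h]-[h\mid g]$ into four kinds of terms:
\begin{itemize}
\item terms purely in $A$, which lie in $F_0$;
\item terms purely in $s(Q)$, namely $[s_x\mid s_y]-[s_y\mid s_x]$, which cancel against $H_2(s)\tau_Q(x,y)$;
\item mixed terms of the form $[a\mid s_q]$ and $[s_q\mid a]$.
\end{itemize}
Under the identification $E^1_{1,1}=C_1(Q,H_1(A))$, a mixed term $[s_q\mid a]$ goes to $a\,e_q$, where $a$ is conjugated as appropriate. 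After using the commutation relation to move the $A$-parts, the surviving mixed terms give $v\,e_x-u\,e_y$. This is a cycle, since $\partial(ve_x-ue_y)=(x-1)v-(y-1)u$, which is $0$ by the hypothesis.

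The main obstacle is fixing the chain-level description of $\epsilon$ with consistent sign and action conventions (left versus right modules, and the orientation $[g\mid h]-[h\mid g]$). My concrete plan is to avoid the full spectral sequence by naturality. Consider the subgroup $\tilde G=\langle g,h\rangle$, or better the map from $\mathbb Z^2=\langle\alpha,\beta\rangle$ to $G$ sending $\alpha\mapsto g$, $\beta\mapsto h$. This is compatible with the extension $\mathbb Z^2\to Q$ via $\alpha\mapsto x$, $\beta\mapsto y$. For $\mathbb Z^2$ one can compute the LHS projection onto $H_1$ of the quotient explicitly, and naturality of LHS transports the result to $G$.

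Alternatively, I would check the formula on the degenerate cases as a consistency test:
\begin{itemize}
\item $u=0$, $x=1$: here $g=(0,1)$ is trivial and both sides vanish.
\item $u=0$, $v\in A^x$ with $y=1$: here $h=v$, and $\tau_G(s_x,v)$ should map to $[v\,e_x]$, as in the proof of Theorem \ref{thm:abbycyc}.
\end{itemize}
Bilinearity of $\tau_G$ in commuting variables modulo $F_0$ then reduces the general case to these two: write $(u,x)=u\cdot s_x$ and use that $\delta_s$ is additive in each variable when the other is fixed, modulo terms in $F_0$. This yields $ve_x-ue_y$ with the stated sign.
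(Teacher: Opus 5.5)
Your check that $g$ and $h$ commute is correct. The formula for $\epsilon(\delta_s(g,h))$, however, is not proved, and the reduction you finally rely on breaks down. Additivity of $\tau_G$ in one variable requires the fixed element to commute with \emph{each} factor. To split $\tau_G\bigl((u,1)s_x,h\bigr)$ you need $h=(v,y)$ to commute with $(u,1)$ and with $s_x=(0,x)$, i.e.\ $(1-y)u=0$ and $(x-1)v=0$ separately. The hypothesis only makes their sum vanish, so in general $\tau_G((u,1),h)$ and $\tau_G(s_x,h)$ are not even defined. This route therefore only handles cycles $ve_x-ue_y$ with $v\in A^x$ and $u\in A^y$, which is not enough for Theorem~\ref{thm:abbybicyc}. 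For instance, let $Q=\mathbb Z^2$ act on $A=\mathbb Z$ with $x$ and $y$ both acting as $-1$. Then $A^x=A^y=0$, yet $H_1(Q,A)\cong\mathbb Z/2$ is generated by $[e_x-e_y]$. This class arises from $u=v=1$, where $(u,1)$ does not commute with $h$.

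The naturality plan misses the essential difficulty. Let $\Lambda=\langle X,Y\rangle\cong\mathbb Z^2$, $\varphi\colon X\mapsto x,\ Y\mapsto y$, and $\psi\colon X\mapsto g,\ Y\mapsto h$. Then $\delta_s(g,h)=\psi_*[T^2]-(s\varphi)_*[T^2]$ is a difference of pushforwards under two \emph{different} homomorphisms $\Lambda\to G$ lying over the same $\varphi$. Neither term lies in $F_1=K(G,A)$ when $\tau_Q(x,y)\neq 0$. No single map of extensions out of $\Lambda$ sees the $(1,1)$-component of the difference: if $\varphi$ is injective, the LHS spectral sequence of $\Lambda$ relative to $\ker\varphi=1$ lives entirely in the bottom row.

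The missing idea, which is what the paper uses, is to pull the extension back along $\varphi$ to $\widetilde G=A\rtimes_\varphi\Lambda$. Your hypothesis is precisely the condition for $X\mapsto u$, $Y\mapsto v$ to extend to a crossed homomorphism $\eta\colon\Lambda\to A$. Hence $\sigma_\eta(\lambda)=(\eta(\lambda),\lambda)$ and $\sigma_0(\lambda)=(0,\lambda)$ are two splittings of $\widetilde G\to\Lambda$, and $\Phi(a,\lambda)=(a,\varphi(\lambda))$ carries them to $\psi$ and $s\varphi$. In $\widetilde G$ the two pushforwards of $[T^2]$ have the same $(2,0)$-component. In a filtered resolution (Wall's), the $(1,1)$-component of their difference is $e_X\otimes v-e_Y\otimes u$, and the rest lies in $F_0$. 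Naturality along $\Phi$ then yields the formula.

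Your bar-complex sketch could in principle replace Wall's resolution, but as written it has no filtration-compatible comparison map; individual non-cycle terms cannot ``lie in $F_0$''. The sign and action bookkeeping you identify as the main obstacle is exactly the computation that remains undone.
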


\begin{proof}
The equalities
 $(u,x)(v,y)=(u+xv,xy)$ and 
$(v,y)(u,x)=(v+yu,yx)$ show
that $g$ and $h$ commute precisely when
$(1-y)u+(x-1)v=0$.

Let $\Lambda=\langle X,Y\mid [X,Y]=1\rangle\cong\mathbb Z^2$
and let $\varphi\colon\Lambda\to Q$ be given by
$\varphi(X)=x$ and $\varphi(Y)=y$. Then $A$ becomes a 
$\Lambda$-module. Put
$\widetilde G=A\rtimes_\varphi\Lambda$.

As  $(1-y)u+(x-1)v=0$, 
the assignments
$\eta(X)=u$ and $\eta(Y)=v$
extend to a crossed homomorphism $\eta\colon\Lambda\to A$:
Consequently,
$\sigma_\eta(\lambda)=(\eta(\lambda),\lambda)$
and
$\sigma_0(\lambda)=(0,\lambda)$
define two splittings of \(\widetilde G\to\Lambda\).

We use Wall's filtered resolution for this extension
\cite{Wal61}.  Its associated graded complex is the tensor-product
complex of resolutions for \(\Lambda\) and \(A\), and its filtration
by the \(\Lambda\)-degree gives the LHS spectral sequence.  In total
degree two the associated graded complex has components of bidegrees
$(2,0)$ $(1,1)$ and $(0,2)$.
Let
$$
   \Delta_\eta
   =(\sigma_\eta)_*[T^2]-(\sigma_0)_*[T^2].
$$
The two terms have the same degree-$(2,0)$ component, namely the
fundamental class of the base torus, so this component cancels.
The two sections differ along the $X$-edge by $u$ and along the
$Y$-edge by $v$.  With the orientation $X\wedge Y$, the
degree-$(1,1)$ component of their difference is therefore
$e_X\otimes v-e_Y\otimes u$,
which is identified with
$v e_X-u e_Y$
in the degree-one complex computing $H_1(\Lambda,A)$.

Any remaining degree-$(0,2)$ component belongs to
$$F_0=
 \im\bigl(
 H_2(A,\mathbb Z)_\Lambda
 \longrightarrow H_2(\widetilde G,\mathbb Z)
 \bigr).
$$
Consequently, under the isomorphism
$F_1/F_0\cong H_1(\Lambda,A)$,
we have
$$
   \epsilon_\Lambda(\Delta_\eta)
      =[\,v e_X-u e_Y\,].
$$
The homomorphism
$\Phi\colon\widetilde G\longrightarrow G$ given by
$(a,\lambda)\longmapsto(a,\varphi(\lambda))$
takes $\Delta_\eta$ to $\delta_s(g,h)$.  Naturality of the LHS
filtration therefore gives
$$
 \epsilon\bigl(\delta_s(g,h)\bigr)
 =
\bigl[\,v e_X-u e_Y\,\bigr].
$$
This concludes the proof.
\end{proof}

\begin{theorem}
\label{thm:abbybicyc}
Let $A$ be an abelian group and let $Q$ be bicyclic.  Then, for
every action of $Q$ on $A$,
$B_0(A\rtimes Q)=0$.
\end{theorem}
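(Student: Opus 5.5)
The plan is to reduce everything to Proposition \ref{prop:B0AQ} and then show that the group $T(A,Q)$ exhausts $H_1(Q,A)$, using Lemma \ref{lem:torus-edge} to produce enough torus classes. Write $G=A\rtimes Q$ with the canonical section $s(q)=(0,q)$. By Proposition \ref{prop:B0AQ},
$$B_0(G)\cong B_0(Q)\oplus H_1(Q,A)/T(A,Q).$$
Since $Q$ is bicyclic, it is abelian. For an abelian group every $x\wedge y$ comes from a commuting pair, so $M_0(Q)=M(Q)$ and $B_0(Q)=0$. It therefore remains to prove $T(A,Q)=H_1(Q,A)$.

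Write $Q=\langle x\rangle\times\langle y\rangle$, allowing either factor to be trivial or infinite; the cyclic case is covered by taking $y=1$. Let $\Lambda=\langle X,Y\mid [X,Y]=1\rangle\cong\Z^2$ and let $\varphi\colon\Lambda\twoheadrightarrow Q$ send $X\mapsto x$ and $Y\mapsto y$, with kernel $K$. Regard $A$ as a $\Lambda$-module via $\varphi$; then $K$ acts trivially on $A$.

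\textbf{Step 1: surjectivity of $H_1(\Lambda,A)\to H_1(Q,A)$.} The five-term exact sequence of the LHS spectral sequence for $1\to K\to\Lambda\to Q\to1$ with coefficients in $A$ ends in
$$H_1(\Lambda,A)\longrightarrow H_1(Q,A)\longrightarrow 0,$$
because $H_0(K,A)=A$. Hence every class in $H_1(Q,A)$ is the image of a class in $H_1(\Lambda,A)$.

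\textbf{Step 2: explicit cycles for $\Lambda$.} Compute $H_1(\Lambda,A)$ with the Koszul resolution of $\Z$ over $\Z\Lambda$. A $1$-chain has the form $ve_X-ue_Y$ with $u,v\in A$, and it is a cycle precisely when $(x-1)v-(y-1)u=0$, i.e.
$$(1-y)u+(x-1)v=0.$$
This is exactly the hypothesis of Lemma \ref{lem:torus-edge}. I expect to have to align orientation and sign conventions with those used in that lemma; up to replacing $(u,v)$ by $(-u,-v)$, which preserves the cycle condition, this causes no difficulty.

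\textbf{Step 3: realising the classes by torus classes.} Given such a cycle, Lemma \ref{lem:torus-edge} shows that $g=(u,x)$ and $h=(v,y)$ commute in $G$. Tracing its proof, the naturality of $\Phi\colon A\rtimes_\varphi\Lambda\to G$ shows that $\epsilon(\delta_s(g,h))$ equals the image under $\varphi_*$ of $[ve_X-ue_Y]\in H_1(\Lambda,A)$. This image is exactly the element written $[ve_x-ue_y]$ in the lemma. Thus every element in the image of $H_1(\Lambda,A)$ lies in $T(A,Q)$, and by Step 1 this gives $T(A,Q)=H_1(Q,A)$. Consequently $B_0(A\rtimes Q)=0$.

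The main obstacle is Step 1 together with the identification in Step 3. One must check that the map $\epsilon_\Lambda$ of Lemma \ref{lem:torus-edge}, which identifies $F_1/F_0$ with $H_1(\Lambda,A)$, is compatible with $\varphi_*$ and with the edge map $\epsilon$ for $G$. This compatibility holds by naturality of the LHS filtration under the morphism of extensions given by $\Phi$. One must also verify that the five-term sequence applies here; it does, since it only requires $K$ to be normal and to act trivially on the coefficients.
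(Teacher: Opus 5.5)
Your proposal is correct and follows the paper's proof: reduce via Proposition~\ref{prop:B0AQ} (with $B_0(Q)=0$ because $Q$ is abelian) to showing $T(A,Q)=H_1(Q,A)$, represent each class by a degree-one cycle $ve_x-ue_y$ with $(1-y)u+(x-1)v=0$, and realise it as $t_s(g,h)$ for $g=(u,x)$, $h=(v,y)$ via Lemma~\ref{lem:torus-edge}. The differences are cosmetic. The paper reads these cycles directly off the tensor product of the cyclic resolutions of $Q$, whose degree-one cycles coincide with your Koszul cycles for $\Lambda$, so your five-term-sequence surjectivity is immediate at chain level. The paper also treats the cyclic case separately via Theorem~\ref{thm:abbycyc}, whereas you cover it by taking $y=1$.
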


\begin{proof}
    If $Q$ is cyclic, the assertion follows from
Theorem~ \ref{thm:abbycyc}. It remains to consider
$Q=\langle x\rangle\times\langle y\rangle
     \cong C_r\times C_s$,
where either factor may be finite or infinite. By
Proposition \ref{prop:B0AQ}, it suffices to prove that
$H_1(Q,A)=T(A,Q)$.

Using the tensor product of the standard resolutions of the two cyclic
factors, the degree-one differential is
$d_1: Ae_x\oplus Ae_y\longrightarrow A$ given by
$d_1(ae_x+be_y)=(x-1)a+(y-1)b$.
Thus every class in $H_1(Q,A)$ has a representative
$ae_x+be_y$ satisfying the cycle condition
$(x-1)a+(y-1)b=0$.

Set $g=(-b,x)$ and $h=(a,y)$.
The preceding cycle condition is
$(1-y)(-b)+(x-1)a=0$,
so Lemma~\ref{lem:torus-edge} applies with $u=-b$ and $v=a$.
It gives
$$t_s(g,h)
   =\epsilon\bigl(\delta_s(g,h)\bigr)
   =[\,ae_x+be_y\,].$$
Hence every element of $H_1(Q,A)$ belongs to $T(A,Q)$, and
therefore
$H_1(Q,A)=T(A,Q)$.
\end{proof}

\begin{example}
    \label{ex:rank3}
    Let $U$ and $V$ be free abelian of rank three with bases $e_1,e_2,e_3$ and $v_1,v_2,v_3$, respectively. Put
    $$R=\langle e_1\otimes v_1+e_2\otimes v_2+e_3\otimes v_3\rangle\le U\otimes V.$$
    Let $W=(U\otimes V)/R\cong \Z^8$. Define $\alpha:U\otimes V\rightarrow W$ to be the quotient map. Let $A=V\oplus W$. For $u\in U$, define an automorphism $T_u$ of $A$ by the rule
    $$T_u(v,w)=(v,w+\alpha(u\otimes v)).$$
    The maps $T_u$ commute, and $(T_u-I)(T_{u'}-I)=0$. Hence $G=A\rtimes U$ is torsion-free nilpotent of class 2 with $G^{\rm ab}\cong U\oplus V$ and $G'=W$. Let $\beta:\wedge^2(U\oplus V)\rightarrow W$ be the commutator map. Note that $\beta$ is zero on both $\wedge^2 U$ and $\wedge^2V$, whereas $\beta$ agrees with $\alpha$ on the crossed term $U\otimes V$. Hence
    $$L_\beta=\ker\beta =\wedge^2U\oplus R\oplus \wedge^2V.$$
    The terms $\wedge^2U$ and $\wedge^2V$ are generated by decomposable elements. Consider an arbitrary decomposable element
    $$(u+v)\wedge (u'+v')=u\wedge u'+u\otimes v'-u'\otimes v+v\wedge v'$$
    in $L_\beta$. Its cross component $u\otimes v'-u'\otimes v$, viewed as a $3\times 3$ matrix, has rank $\le 2$. On the other hand, every nonzero element of $R$ is a scalar matrix and thus has rank 3 over $\Q$. Thus no nonzero element of $R$ is generated by decomposable elements of $L_\beta$. Consequently, $B_0(G)\cong L_\beta/D_\beta\cong R\cong\Z$. 
\end{example}

\subsection{Wreath products}
\label{ss:wreath}
Let $H$ and $Q$ be groups. Consider their restricted regular wreath product $W=H\wr Q$. Let 
$$N=H^{(Q)}=\bigoplus_{q\in Q}H$$
be its base group. Then $W=N\rtimes Q$. Let $\pi:W\to Q$ be the projection, an $s:Q\to W$ a homomorphic section. Let $\iota:H\hookrightarrow W$ be the embedding of $H$ in the coordinate indexed by $1\in Q$. We have that
$$N^{\rm ab}=\bigoplus H^{\rm ab}$$
is isomorphic to $\Z Q\otimes_{\Z} H^{\rm ab}$ as a $Q$-module. Hence Shapiro's lemma gives $H_i(Q,N^{\rm ab})=0$ for all $i>0$. Consider the LHS spectral sequence
$$E_{p,q}^2=H_p(Q,H_q(N,\Z))\Rightarrow H_{p+q}(W,\Z).$$
By the above, $E_{1,1}^2=H_1(Q,N^{\rm ab})=0$ and $E_{2,1}^2=H_2(Q,N^{\rm ab})=0$. The only possible differentials entering the term in bidegree $(0,2)$ are $d_{2,1}^2:E_{2,1}^2\rightarrow E_{0,2}^2$ and $d_{3,0}^3:E_{3,0}^3\rightarrow E_{0,2}^3$. The first one has zero source, the second one is zero since we have a split extension, the argument being similar to that of the proof of Lemma \ref{lem:spect}. Therefore $E_{0,2}^\infty=E_{0,2}^2=H_0(Q,H_2(N,Z))=H_2(N,\Z)_Q$. In addition to that, $E_{1,1}^\infty=0$ and $E_{2,0}^\infty=H_2(Q,\Z)$. The total-degree-two filtration yields that the inclusion $N\hookrightarrow W$ induces an isomorphism
$$H_2(N,\Z)_Q\cong \ker (H_2(W,Z)\rightarrow H_2(Q,\Z)).$$

\begin{theorem}
    \label{thm:wreath}
    For arbitrary groups $H$ and $Q$, we have a split right-exact sequence
    $$
    \begin{tikzcd}
        B_0(H) \ar[r, "B_0(\iota)"] & B_0(H\wr Q) \ar[r, "B_0(\pi)"] & B_0(Q) \ar[r] & 0
    \end{tikzcd}
$$
In other words, $B_0(H\wr Q)\cong B_0(Q)\oplus B_0(H)/\ker B_0(\iota)$.
\end{theorem}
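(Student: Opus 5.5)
By Theorem~\ref{thm:B0split}, applied to $W=N\rtimes Q$ with the homomorphic section $s$, we have $B_0(W)\cong B_0(Q)\oplus K(W,N)/D_s(W,N)$, and the projection onto $B_0(Q)$ is $B_0(\pi)$. This already gives the surjectivity and the splitting. It remains to show that $B_0(\iota)$ maps $B_0(H)$ onto the complementary summand $K(W,N)/D_s(W,N)$. By right-exactness, that summand is then isomorphic to $B_0(H)/\ker B_0(\iota)$.

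First I will describe $K(W,N)$. The LHS computation preceding the theorem shows that the inclusion $N\hookrightarrow W$ induces an isomorphism $H_2(N,\Z)_Q\cong K(W,N)$. By the K\"unneth formula for restricted direct sums (a direct limit of finite products),
$$H_2(N,\Z)\cong\bigoplus_{q\in Q}H_2(H,\Z)\oplus\bigoplus_{\{q,q'\}}H^{\rm ab}\otimes H^{\rm ab}.$$
The cross terms are spanned by classes $\tau_N(h_q,k_{q'})$ for $h,k\in H$ placed in distinct coordinates; such elements commute, so these cross terms lie in $M_0(N)$. Taking $Q$-coinvariants, the diagonal part becomes a single copy of $H_2(H,\Z)$, embedded via $\iota$. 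Hence $K(W,N)$ is generated by $H_2(\iota)H_2(H,\Z)$ together with elements of $M_0(W)\cap K(W,N)$. Since $\delta_s(x,y)=\tau_W(x,y)$ whenever $x,y\in N$, the latter all lie in $D_s(W,N)$. This proves that $B_0(H)\to K(W,N)/D_s(W,N)$ is surjective, and that it factors through $B_0(\iota)$. Commuting pairs of $H$ go to commuting pairs in $N$, so $M_0(H)$ maps into $D_s(W,N)$.

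The main obstacle is the remaining compatibility: the map $B_0(H)\to B_0(W)$ must actually land in the summand $K/D_s$. That is, the image of $H_2(\iota)$ must lie in $K(W,N)$, and the composite must agree with $B_0(\iota)$. The first part is immediate, since $\pi\iota$ is trivial, so $H_2(\pi)H_2(\iota)=0$ and $B_0(\pi)B_0(\iota)=0$. The second part follows because the decomposition in Theorem~\ref{thm:B0split} is induced from $H_2(W)=K\oplus H_2(s)H_2(Q)$. Combining these, $\operatorname{im}B_0(\iota)=\ker B_0(\pi)$, which gives exactness in the middle, and the splitting yields $B_0(H\wr Q)\cong B_0(Q)\oplus B_0(H)/\ker B_0(\iota)$.

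One point needs care when $H$ has nontrivial $H^{\rm ab}$: I must check that the K\"unneth cross terms are generated by torus classes of commuting elements, rather than by Tor terms. For $H_2$ of a direct product $H_1\times H_2$, the K\"unneth formula gives $H_2(H_1)\oplus H_2(H_2)\oplus H_1(H_1)\otimes H_1(H_2)$. No Tor term appears in degree $2$, since $\operatorname{Tor}(H_0,H_1)=0$ because $H_0=\Z$ is free. The cross term is the image of $x\otimes y\mapsto\tau(x,y)$, so this step goes through.
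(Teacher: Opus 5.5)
Your proof is correct and has the same structure as the paper's. You use Theorem~\ref{thm:B0split} for the splitting, the LHS identification $H_2(N,\Z)_Q\cong K(W,N)$, and the fact that in coinvariants all coordinate copies of $H_2(H,\Z)$ collapse onto $H_2(\iota)H_2(H,\Z)$. The one difference is a local substitution. The paper cites that $B_0$ commutes with direct sums and direct limits to get $B_0(N)_Q\cong B_0(H)$. You instead prove the needed piece directly with the K\"unneth formula, noting that the cross terms $H^{\rm ab}\otimes H^{\rm ab}$ are spanned by torus classes of elements in distinct coordinates and so lie in $D_s(W,N)$. This makes your argument self-contained.
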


\begin{proof}
        By a result of Kang \cite{Kan14} (see also \cite{Jez16} for a proof not assuming finiteness), $B_0$ commutes with direct sums. The restricted direct sum is a direct limit of its finite direct subsums. As $B_0$ also commutes with direct limits \cite{Mor12}, we get 
    $$B_0(N)\cong \bigoplus_{q\in Q}B_0(H).$$
    As the action of $Q$ shifts the summands regularly, we deduce that $B_0(N)_Q\cong B_0(H)$.

    By Theorem \ref{thm:B0split}, 
    $$B_0(W)\cong B_0(Q)\oplus \frac{K(W,N)}{K(W,N)\cap M_0(W)}.$$
    Denote the second direct summand by $R(W,N)$. By the above, the inclusion $N\hookrightarrow W$ induces an isomorphism $H_2(N,\Z)_Q\cong K(W,N)$. The image of $M_0(N)_Q$ in $H_2(N,\Z)_Q$ is contained in $K(W,N)\cap M_0(W)$, so $R(W,N)$ is a quotient of
    $$H_2(N,\Z)_Q/\im (M_0(N)_Q\to H_2(N,\Z)_Q)\cong B_0(N)_Q\cong B_0(H).$$
    All coordinate inclusions $H\hookrightarrow W$ are conjugate in $W$. Inner automorphisms induce the identity on integral homology, so the surjection $B_0(H)\twoheadrightarrow R(W,N)$ is induced by $\iota$. Also note that $R(W,N)=\ker B_0(\pi)$. This gives the required exactness, and the splitting is induced by the map $B_0(s)$.
\end{proof}
 


\section{Bogomolov multipliers of infinite simple groups}
\label{sec:infinite-simple}

Kunyavski\u{\i} proved that the Bogomolov multiplier of every finite
nonabelian simple group is trivial \cite{Kun08}.  This result
depends essentially on the structure theory of finite simple groups and does
not extend to arbitrary infinite simple groups.  In fact, infinite simple
groups can have nontrivial, and even infinitely generated, homological
Bogomolov multipliers.

Flores and Rodr\'{\i}guez prove that, given an appropriate countable family
$\{H_i\mid i\in I\}$ of countable groups without involutions, there exists a
countable group $G$ with the following properties
\cite[Theorem~5.4]{FloresRodriguez}:
\begin{enumerate}
\item $G$ is infinite and simple;
\item $G$ has the Ol'shanski\u{\i} subgroup property relative to the
      $H_i$: every proper subgroup of $G$ is either cyclic or is contained
      in a conjugate of some $H_i$;
\item the Schur multiplier $H_2(G,\mathbb Z)$ is free abelian of countably
      infinite rank.
\end{enumerate}
The groups $H_i$ may be chosen to be nonabelian simple groups all of whose
proper subgroups are cyclic; suitable groups are supplied by the
Obraztsov--Ol'shanski\u{\i} constructions used in
\cite{FloresRodriguez}.

\begin{theorem}
\label{thm:infinite-simple-nontrivial-b0}
There exists a countable infinite simple group $G$ such that
$B_0 (G)\cong \mathbb Z^{(\mathbb N)}$.
\end{theorem}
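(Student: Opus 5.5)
We take $G$ to be the Flores--Rodr\'{\i}guez group from \cite[Theorem~5.4]{FloresRodriguez}, with the $H_i$ chosen as nonabelian simple groups all of whose proper subgroups are cyclic. By property (3), $M(G)=H_2(G,\mathbb Z)\cong\mathbb Z^{(\mathbb N)}$. The plan is to show that $M_0(G)=0$, so that $B_0(G)=M(G)/M_0(G)\cong\mathbb Z^{(\mathbb N)}$.

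Recall from the preliminaries that $M_0(G)$ is the sum of the images of $H_2(A,\mathbb Z)\to H_2(G,\mathbb Z)$ over all abelian subgroups $A\le G$. Equivalently, it is generated by the torus classes $\tau_G(x,y)$ with $[x,y]=1$. We therefore classify the abelian subgroups of $G$.

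Let $A\le G$ be abelian. Since $G$ is simple and nonabelian, $A$ is proper, so by property (2) either $A$ is cyclic or $A\le gH_ig^{-1}$ for some $i$ and some $g$. In the second case $A$ is an abelian subgroup of the nonabelian simple group $H_i$, hence a proper subgroup of $H_i$, and so $A$ is cyclic by the choice of the $H_i$. Thus every abelian subgroup of $G$ is cyclic. Since $H_2(C,\mathbb Z)=0$ for every cyclic group $C$, finite or infinite, every map $H_2(A,\mathbb Z)\to H_2(G,\mathbb Z)$ is zero, and $M_0(G)=0$.

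One can see the same thing directly: if $[x,y]=1$, then $\langle x,y\rangle$ is abelian, hence cyclic, so the class $\tau_G(x,y)$ factors through $H_2$ of a cyclic group and vanishes.

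The only step needing care is checking that the hypotheses of \cite[Theorem~5.4]{FloresRodriguez} are actually met by such $H_i$: they must be countable, without involutions, and have all proper subgroups cyclic. This is supplied by the Obraztsov--Ol'shanski\u{\i} constructions cited there, so I take it as given. I also expect that the subgroup property in (2) is meant to apply to proper subgroups of $G$, which is how it is used above. Everything else is formal.
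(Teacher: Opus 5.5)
Your proposal is correct and follows the paper's proof essentially verbatim. You choose the Flores--Rodr\'{\i}guez group with the $H_i$ nonabelian simple and all proper subgroups cyclic, use the Ol'shanski\u{\i} subgroup property to show every abelian subgroup of $G$ is cyclic, and conclude $M_0(G)=0$, hence $B_0(G)=H_2(G,\mathbb Z)\cong\mathbb Z^{(\mathbb N)}$; your explicit remark that $H_2(C,\mathbb Z)=0$ for every cyclic $C$ supplies the one step the paper leaves implicit.
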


\begin{proof}
Choose the family $\{H_i\mid i\in I\}$ in
\cite[Theorem~5.4]{FloresRodriguez} so that every $H_i$ is nonabelian
simple and every proper subgroup of $H_i$ is cyclic.  Let $G$ be the
resulting infinite simple group.

We first show that every abelian subgroup of $G$ is cyclic.  Let $B\leq G$
be abelian.  Since $G$ is nonabelian, $B$ is a proper subgroup of $G$.
The Ol'shanski\u{\i} subgroup property gives two possibilities.  Either
$B$ is cyclic, or
$B\leq H_i^g$
for some $i\in I$ and $g\in G$.  In the latter case $B$ is a proper
subgroup of $H_i^g$, because $H_i^g$ is nonabelian whereas $B$ is
abelian.  By the choice of $H_i$, every proper subgroup of $H_i^g$ is
cyclic.  Thus $B$ is cyclic in either case.
Hence $M_0 (G)=0$.
On the other hand, \cite[Theorem~5.4(3)]{FloresRodriguez} gives
$H_2(G,\mathbb Z)\cong\mathbb Z^{(\mathbb N)}$.
It follows that $B_0 (G)
 =H_2(G,\mathbb Z)/ M_0 (G)
 \cong\mathbb Z^{(\mathbb N)}$.
\end{proof}

The opposite behaviour also occurs among infinite
simple groups.

\begin{example}
\label{ex:infinite-alternating-b0}
Let
$A_\infty=\bigcup_{n\geq5}A_n$
be the group of finitely supported even permutations of a countably
infinite set.
The group $A_\infty$ is the filtered union of the finite alternating
groups $A_n$ under the standard inclusions.  
 Since direct limits commute with homology, it follows that
 $H_2(A_\infty,\mathbb Z)
 \cong\varinjlim_{n}H_2(A_n,\mathbb Z)$.

Let $\alpha\in H_2(A_\infty,\mathbb Z)$.  There exist $n$ and
$\alpha_n\in H_2(A_n,\mathbb Z)$ whose image is $\alpha$.  Since $A_n$
is a finite nonabelian simple group, Kunyavski\u{\i}'s theorem gives
$B_0 (A_n)=0$.
Equivalently,
 $H_2(A_n,\mathbb Z)= M_0 (A_n)$.
Thus $\alpha_n$ is a finite sum of classes represented by commuting
pairs in $A_n$.  Their images remain commuting pairs in $A_\infty$, so
the image $\alpha$ belongs to $ M_0 (A_\infty)$.  Hence
$H_2(A_\infty,\mathbb Z)= M_0 (A_\infty)$,
which proves that $B_0 (A_\infty)=0$.
\end{example}


\section{Finitary Bogomolov multipliers}
\label{s:finitaryB0}

\noindent
Let $G$ be a group and identify the Schur multiplier
$M(G)=H_2(G,\mathbb Z)$ with the kernel of the commutator map
$\kappa_G\colon G\wedge G\rightarrow [G,G]$.
For an inclusion $i_H\colon H\hookrightarrow G$, write
$(i_H)_*: M(H)\to M(G)$ for the induced homomorphism.
Define $M_{F}(G)$ be the sum of all images of the maps $(i_H)_*:H_2(H,\Z)\rightarrow H_2(G,\Z)$, where $H$ runs through all finite subgroups of $G$. Also, let $M_{0F}(G)$ be the sum of all images of the maps $(i_A)_*:H_2(A,\Z)\to H_2(G,\Z)$, as $A$ runs through all finite abelian subgroups of $G$. Put
$$B_{0F}(G)=M_F(G)/M_{0F}(G).$$
This quotient is called the \emph{finitary Bogomolov multiplier} of $G$. Note that if $G$ is a torsion-free group, then $B_{0F}(G)$ is trivial. Also, it is proved in \cite[Corollary 3.4]{Mor12} that if $G$ is a locally finite group, then $B_{0F}(G)\cong B_0(G)$.  In general, the groups  $B_0(G)$ and 
$B_{0F}(G)$ may not be isomorphic, see the one-relator and Burnside-group
examples in \cite{Mor12}.

\begin{lemma}
    \label{lem:wedgesF}
    We have that
    $$M_{0F}(G)= \langle x\wedge y\mid x,y\in G, [x,y]=1,|x|<\infty,|y|<\infty\rangle.$$
\end{lemma}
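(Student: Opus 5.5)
We prove the two inclusions separately, arguing as in the identification $M_0(G)=\langle x\wedge y\mid [x,y]=1\rangle$ recalled from \cite{Mor12}. Write $W$ for the right-hand side.

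For $W\subseteq M_{0F}(G)$, take commuting $x,y$ of finite order. Then $A=\langle x,y\rangle$ is a finite abelian subgroup of $G$. The element $x\wedge y\in A\wedge A=H_2(A,\mathbb Z)$ (for abelian $A$ we have $A\wedge A=\wedge^2A=M(A)$) maps under $(i_A)_*$ to $x\wedge y\in M(G)$, by naturality of the nonabelian exterior square and of the identification $M(G)=\ker\kappa_G$. Note that $x\wedge y$ does lie in $M(G)$, since $\kappa_G(x\wedge y)=[x,y]=1$. Hence every generator of $W$ lies in $M_{0F}(G)$.

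For $M_{0F}(G)\subseteq W$, let $A$ be a finite abelian subgroup. Then $H_2(A,\mathbb Z)\cong A\wedge A\cong\wedge^2 A$ is generated by the elements $a\wedge b$ with $a,b\in A$. Here we use that, for abelian $A$, the exterior square is generated by the images of pairs and $\kappa_A=0$, so $M(A)=A\wedge A$. By naturality, $(i_A)_*(a\wedge b)=a\wedge b\in G\wedge G$. Since $a$ and $b$ commute and both have finite order (as $A$ is finite), $(i_A)_*(a\wedge b)$ is a generator of $W$. Summing over all finite abelian $A$ gives $M_{0F}(G)\subseteq W$.

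The only point needing care is compatibility of the two descriptions of $M$, Hopf's formula and $\ker\kappa$, with the maps induced by inclusions. That is, we need the Miller/Brown--Loday isomorphism $H_2(-,\mathbb Z)\cong\ker\kappa$ to be natural. This is standard, since it is induced functorially from $\widehat G=F/[F,R]$, choosing free presentations compatible with the inclusion. I do not expect a real obstacle; the proof is a direct unwinding of definitions, mirroring the corresponding statement for $M_0(G)$.
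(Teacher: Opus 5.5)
Your proposal is correct and matches the paper's proof: both inclusions are argued the same way, using that $\kappa_A$ is trivial for abelian $A$, so $M(A)=A\wedge A$ is generated by the wedges $a\wedge b$, together with naturality of the exterior square under the inclusion $A\hookrightarrow G$. Your added remark that the identification $H_2(-,\mathbb Z)\cong\ker\kappa$ must be natural is a point the paper uses without comment.
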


\begin{proof}
    Let $N$ denote the subgroup on the right.  If $x$ and $y$ commute and
    both have finite order, then $A=\langle x,y\rangle$ is a finite abelian
    subgroup of $G$.  Since $\kappa_A$ is trivial, $M(A)=A\wedge A$, and
    the image of $x\wedge y\in M(A)$ in $G\wedge G$ is the displayed
    element $x\wedge y$.  Thus $N\leq M_{0F}(G)$.

    Conversely, let $A\leq G$ be finite abelian.  The exterior square
    $A\wedge A=M(A)$ is generated by the elements $a\wedge b$ with
    $a,b\in A$.  Their images in $G\wedge G$ belong to $N$, because
    $a$ and $b$ commute and have finite order.  Hence
    $(i_A)_*M(A)\leq N$.  This
   gives $M_{0F}(G)\leq N$.
\end{proof}

In a finitely generated nilpotent group $G$, the torsion elements form a finite subgroup $T$, and every finite subgroup of $G$ is contained in $T$. Thus we have the following:

\begin{proposition}
    \label{prop:BoFNil}
    Let $G$ be a finitely generated nilpotent group. then $M_F(G)=(i_T)_*M(T)$ and $M_{0F}(G)=\langle x\wedge y\mid x,y\in T,[x,y]=1\rangle$ (here $x\wedge y$ is considered as an element of $G\wedge G$).
\end{proposition}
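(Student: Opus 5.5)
The plan is to reduce both equalities to the single structural fact that every finite subgroup of $G$ lies in the torsion subgroup $T$, and then apply Lemma~\ref{lem:wedgesF}.

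First I recall why $T$ is a finite subgroup. In a nilpotent group the torsion elements form a subgroup. Since $G$ is finitely generated nilpotent, it satisfies the maximal condition on subgroups, so $T$ is finitely generated. A finitely generated nilpotent torsion group is finite, so $T$ is finite. Any finite subgroup $H\le G$ consists of torsion elements, hence $H\le T$.

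For the first equality, take a finite subgroup $H\le G$. The inclusion $i_H$ factors as $H\hookrightarrow T\hookrightarrow G$. By functoriality of $H_2$, $(i_H)_*M(H)\le (i_T)_*M(T)$. Summing over all finite $H$ gives $M_F(G)\le (i_T)_*M(T)$. The reverse inclusion is immediate, because $T$ is itself one of the finite subgroups in the sum defining $M_F(G)$.

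For the second equality, Lemma~\ref{lem:wedgesF} describes $M_{0F}(G)$ as the subgroup generated by the $x\wedge y$ with $x,y$ commuting elements of finite order. Elements of finite order are exactly the elements of $T$, so the generating set is the displayed one, with the wedge taken in $G\wedge G$. Under the identification of $M(G)$ with $\ker\kappa_G$, the image of $x\wedge y\in T\wedge T$ in $G\wedge G$ is $x\wedge y$, so no compatibility issue arises.

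The only nontrivial input is the finiteness of $T$ and the fact that it contains all finite subgroups; both are standard facts about finitely generated nilpotent groups. Everything else is functoriality together with Lemma~\ref{lem:wedgesF}, so I expect no real obstacle.
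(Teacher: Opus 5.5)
Your proposal is correct and follows the same approach as the paper, which derives the proposition directly from two facts: $T$ is finite, and every finite subgroup of $G$ lies in $T$. As in your argument, functoriality gives the first equality and Lemma~\ref{lem:wedgesF} gives the second; your write-up just spells out the steps the paper leaves implicit.
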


\begin{proposition}
\label{p:fin}
Let $G$ have finitely many conjugacy classes of finite subgroups. Then $B_{0F}(G)$ is a finite abelian group.
\end{proposition}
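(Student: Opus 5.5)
The plan is to show that $M_F(G)$ is generated by the images of finitely many finite subgroups, and that each such image is a finite group; then $B_{0F}(G)$, as a quotient of $M_F(G)$, is finite. It is abelian because $M_F(G)$ is a subgroup of the abelian group $H_2(G,\Z)$.

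\emph{Step 1 (reduction to representatives).} Fix representatives $H_1,\ldots,H_k$ of the finitely many conjugacy classes of finite subgroups of $G$. Let $H$ be an arbitrary finite subgroup, say $H=gH_jg^{-1}$. Then $i_H\circ c_g = c_g\circ i_{H_j}$, where $c_g$ denotes conjugation. Conjugation by $g$ is an inner automorphism of $G$, and inner automorphisms act trivially on $H_2(G,\Z)$. Hence
$$(i_H)_*M(H)=(c_g)_*(i_{H_j})_*M(H_j)=(i_{H_j})_*M(H_j).$$
It follows that
$$M_F(G)=\sum_{j=1}^k (i_{H_j})_*M(H_j).$$

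\emph{Step 2 (finiteness).} For each finite group $H_j$, the Schur multiplier $M(H_j)=H_2(H_j,\Z)$ is finite. This is standard: $H_n(H,\Z)$ is annihilated by $|H|$ for $n>0$, and it is finitely generated because the bar resolution of a finite group is finitely generated in each degree. So $M_F(G)$ is a sum of finitely many finite subgroups of an abelian group, and is therefore finite.

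\emph{Step 3.} Since $M_{0F}(G)\le M_F(G)$, the quotient $B_{0F}(G)=M_F(G)/M_{0F}(G)$ is a finite abelian group.

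None of these steps is a real obstacle. The only point needing care is Step 1, namely that inner automorphisms induce the identity on $H_2(G,\Z)$. The paper already uses this fact in the proof of Theorem \ref{thm:wreath}, so I will simply cite it. I will also note that the conjugating element $g$ need not normalize $H_j$; the argument only uses that $c_g$ maps $H_j$ isomorphically onto $H$, so that $i_H\circ c_g|_{H_j}=c_g\circ i_{H_j}$.
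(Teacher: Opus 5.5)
Your proof is correct and takes essentially the same approach as the paper. You use that inner automorphisms act trivially on $H_2(G,\mathbb Z)$ to reduce $M_F(G)$ to the images of finitely many conjugacy-class representatives, and then note that each $M(H_j)$ is finite; your remark that $c_g$ restricted to $H_j$ maps $M(H_j)$ onto $M(H)$ just spells out a step the paper leaves implicit.
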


\begin{proof}
Suppose $K=g^{-1}Hg$.  Let $c_g\colon H\to K$ be conjugation by $g$.
The two composites $H\xrightarrow{c_g}K\hookrightarrow G$ and
$H\hookrightarrow G\xrightarrow{c_g}G$ agree.  An inner automorphism
of $G$ induces the identity on $H_2(G,\mathbb Z)$.  By naturality,
the images of $M(H)$ and $M(K)$ in $M(G)$ are therefore equal.

Choose representatives $H_1,\ldots,H_r$ of the conjugacy classes of
finite subgroups.  Then
\[
 M_F(G)=\big\langle (i_{H_j})_*M(H_j)\mid 1\leq j\leq r\big\rangle.
\]
For every finite group $H_j$, the Schur multiplier $M(H_j)$ is finite.
Thus $M_F(G)$, an abelian group generated by finitely many finite
subgroups, is finite. The result follows.
\end{proof}

The above result in particular applies to polycyclic-by-finite or arithmetic groups \cite{Seg05}, and, for example, to finitely generated Kleinian groups \cite{Fei91}. On the other hand, 
the group $B_{0F}(G)$ can be infinite in general, as the following example shows.
\begin{example}
\label{ex:infin}
    The group $B_{0F}(G)$ can be infinite.  Let
$P$ be the 149th group of order 64 in GAP's library of small groups.  Then
$\B_0(P)\cong C_2$. Take $G$ to be the restricted direct product
    $$
   G=\bigoplus_{n\geq 1}P_n,
   \qquad P_n\cong P.$$
The group $G$ is locally finite.  For finite direct products we have
$\B_0(X\times Y)
 \cong B_0(X)\oplus B_0(Y)$, see Kang \cite[Theorem 1.4]{Kan14} for finite groups, or Jezernik \cite[Theorem 3.5]{Jez16} for general groups.
Homology and $B_0$ commute with direct limits
\cite[Proposition~3.6]{Mor12}; hence, using also
\cite[Corollary~3.4]{Mor12},
$$
 B_{0F}(G)
 \cong  B_0(G)
 \cong \varinjlim_m B_0(P^m)
 \cong \bigoplus_{n\geq1}C_2.
$$
Thus $B_{0F}(G)$ is an infinite elementary abelian $2$-group.
\end{example}

Now let $G$ be a polycyclic group.  Such a group has only finitely many
conjugacy classes of finite subgroups, and the package \texttt{Polycyclic}
can compute representatives $H_1,\ldots,H_r$.  Compute the exterior
square $G\wedge G$, its crossed pairing
$\lambda_G\colon G\times G\to G\wedge G$,
$(x,y)\longmapsto x\wedge y$,
and $\kappa_G\colon G\wedge G\to [G,G]$ using the algorithm of
\cite{Eic08}.  For each $H_i$, compute the analogous data
$H_i\wedge H_i$, $\lambda_{H_i}$, and $\kappa_{H_i}$.  Functoriality of
the exterior square gives a homomorphism
$\phi_i\colon H_i\wedge H_i\longrightarrow G\wedge G$.
If $h_1,\ldots,h_s$ is a polycyclic generating sequence for $H_i$, the
signed ordered wedges
$$
 \lambda_{H_i}(h_a^{\varepsilon},h_b^{\delta}),
 \qquad 1\leq a,b\leq s,\quad \varepsilon,\delta\in\{-1,1\},
$$
generate $H_i\wedge H_i$: this follows by passing to the exterior
quotient from the tensor-square generating theorem of \cite{Bly09}.
Since $M(H_i)=\ker\kappa_{H_i}$,
$$
 M_F(G)=\big\langle\phi_i(\ker\kappa_{H_i})\mid1\leq i\leq r\big\rangle
$$
and
$$
 M_{0F}(G)=\big\langle\phi_i(\ker\kappa_{H_i})\mid
 1\leq i\leq r,\ H_i\text{ abelian}\big\rangle.
$$
Taking the quotient computes $B_{0F}(G)$.

\begin{example}
\label{ex:cryst}
    A crystallographic group $G$ has a normal translation lattice
$L\cong\mathbb Z^n$ and finite point group $P=G/L\leq
\mathrm{GL}_n(\mathbb Z)$.  The action is faithful.  The extension $G$
is polycyclic when $P$ is polycyclic.

The computation accompanying this section gives the following results.
All one-dimensional crystallographic groups, all $17$ affine types in
dimension $2$, and all $219$ affine types in dimension $3$ in
\texttt{CrystCat} have trivial $B_{0F}$.  (The $219$ affine
types correspond to the usual $230$ three-dimensional space-group types,
with enantiomorphic pairs identified by affine equivalence.)

For a higher-dimensional example, let $P=\operatorname{SmallGroup}(64,149)$.
Its minimal faithful permutation representation has degree $12$ and hence
gives a particular integral permutation-lattice representation
$P\leq\mathrm{GL}_{12}(\mathbb Z)$.  For this fixed integral
representation, \texttt{Cryst} returns $16$ space groups up to conjugacy
by translations.  Exactly one is symmorphic; its finitary multiplier is
$C_2$.  The remaining $15$ groups are nonsymmorphic and have trivial
finitary multiplier.  
\end{example}


\section{Implementations}
\label{s:implement}

\subsection{Torsion-free nilpotent groups of class 2}
\label{ss:implement_tfclass2}

Let $G$ be a finitely generated torsion-free group of class $2$ with $V=G^{\rm ab}$ torsion free. Denote $W=G'$.

Our implementation includes:

\begin{itemize}
    \item Construction of the matrix of the commutator map $\beta:\wedge^2V\twoheadrightarrow W$ from commutators of lifts of a basis of $V$, computation of its integral kernel $L_\beta$. 
    \item Computation of approximations of $B_0(G)$ as follows. We compute $G\wedge G$ and $M(G)$. Then we form a subgroup $S$ generated by the wedges of central generators, and wedges of commuting pairs in a bounded range. The computed quotients $M(G)/S$ admits $B_0(G)$ as its quotient. Triviality of $M(G)/S$ implies $B_0(G)=0$. 
\end{itemize}

\subsection{Virtually abelian polycyclic groups}
\label{ss:implement_virtabpoly}

Let $G$ be a virtually abelian polycyclic group. 
The algorithm for computing $B_0(G)$ has been implemented in GAP using the package
\textsf{Polycyclic}.  Given a virtually abelian pcp group $G$, we first construct a normal torsion-free
abelian subgroup $A$ of finite index from the centre of the Fitting subgroup
and form the finite quotient $Q=G/A$.  From a transversal of $A$ in $G$ it
computes the action matrices of $Q$ on $A$ and the corresponding factor set.
For each $(q,r)\in Q^2$, the commuting lifts in $As_q\times As_r$ are then
obtained by solving an integral linear system, whose solution set is either
empty or an affine lattice.  The package \textsf{Polycyclic} already computes $G\wedge G$, the commutator map
$\kappa\colon G\wedge G\to G'$, and the crossed pairing
$\lambda(g,h)=g\wedge h$.  On each affine lattice the map $\lambda$ has
degree at most two in the lattice parameters, so its values on
$0,e_i,2e_i$ and $e_i+e_j$ generate the values on the entire lattice.
The implementation collects these finitely many commuting wedges, forms
their subgroup $M_0(G)\leq\ker\kappa=H_2(G,\mathbb Z)$, and returns
$B_0(G)=H_2(G,\mathbb Z)/M_0(G)$, 
together with its abelian invariant factors.  Thus the computation is exact
and terminating on the promised class of virtually abelian pcp groups; the
finite sampling is justified by the quadratic finite-difference argument
and is not a heuristic bounded search.

\subsection{Finitary Bogomolov multipliers}
\label{ss:implement_finitary}

The computation of $B_{0F}(G)$ has been implemented in GAP for groups given by a polycyclic presentation, using the package \texttt{Polycyclic}. The function first computes the nonabelian exterior square $G\wedge G$, together with the crossed pairing $\lambda_G(x,y)=x\wedge y$ and the natural epimorphism $\kappa_G\colon G\wedge G\to [G,G]$. It then uses \texttt{FiniteSubgroupClasses} to obtain representatives $H_1,\ldots,H_r$ of the conjugacy classes of finite subgroups of $G$. For each $H_i$, the program computes $H_i\wedge H_i$ and constructs the homomorphism $\phi_i\colon H_i\wedge H_i\to G\wedge G$ induced by inclusion. This map is specified on the generating elements $\lambda_{H_i}(h_a^\varepsilon,h_b^\delta)$, where $(h_1,\ldots,h_s)$ is a polycyclic generating sequence for $H_i$ and $\varepsilon,\delta\in\{-1,1\}$, and the implementation verifies that these elements generate $H_i\wedge H_i$. Since $M(H_i)=\ker\kappa_{H_i}$, the program forms
$$
 M_F(G)=\left\langle\phi_i\bigl(\ker\kappa_{H_i}\bigr)\mid 1\leq i\leq r\right\rangle$$
 and
 $$
 M_{0F}(G)=\left\langle\phi_i\bigl(\ker\kappa_{H_i}\bigr)\mid H_i\ \text{abelian}\right\rangle .
$$
Finally, it returns the abelian invariants of the quotient $M_F(G)/M_{0F}(G)$, together with the intermediate subgroups and maps, so that the computation can be independently inspected.

The crystallographic computations are reproduced by the GAP program \texttt{reproduce\_crystallographic.g}, using the packages \texttt{Polycyclic}, \texttt{Cryst}, and \texttt{CrystCat}. In dimensions at most three, the program first treats the two one-dimensional crystallographic groups and then enumerates the $17$ affine space-group types in dimension $2$ and the $219$ affine types in dimension $3$ contained in \texttt{CrystCat}. Each group is converted to a polycyclic presentation, and its finitary Bogomolov multiplier is computed using \texttt{B0FByExteriorSquare}. For the twelve-dimensional example, the program constructs a minimal faithful permutation representation of $P=\operatorname{SmallGroup}(64,149)$, which has degree $12$, and converts it into an integral permutation-matrix representation $P\leq\operatorname{GL}_{12}(\mathbb Z)$. The command \texttt{SpaceGroupsByPointGroupOnRight} then constructs the $16$ crystallographic extension classes associated with this fixed integral representation. For each class, the program determines whether it is symmorphic and computes its finitary Bogomolov multiplier. The calculation confirms that every crystallographic group in dimensions at most three has trivial multiplier, while among the $16$ twelve-dimensional groups the unique symmorphic group has $B_{0F}\cong\mathbb Z/2\mathbb Z$ and the remaining $15$ groups have trivial multiplier.



\section*{Use of AI tools}

\noindent
The author used OpenAI's ChatGPT and Codex as assistive tools in the
preparation of this manuscript.  They were used for language editing,
LaTeX drafting, preliminary literature searches, checking mathematical
arguments, and developing and testing portions of the accompanying GAP
code.  All mathematical statements, proofs, references, and computational
results were subsequently reviewed and verified by the author, who assumes
full responsibility for the content of the paper.


    \end{document}